\theoremstyle{plain}
\newtheorem{thm}{Theorem}[section]
\newtheorem{thma}{Theorem}
\newtheorem{lem}[thm]{Lemma}
\newtheorem{cor}[thma]{Corollary}
\newtheorem{prop}[thm]{Proposition}
\theoremstyle{definition}
\newtheorem*{defna}{Definition}
\theoremstyle{remark}
\def\R{\mathbb{R}}
\def\Z{\mathbb{Z}}
\def\C{\mathbb{C}}
\def\Q{\mathbb{Q}}
\def\N{\mathbb{N}}
\begin{document}
	
	\title[Sparse bounds]{Sparse bounds for discrete singular Radon transforms}
	\author{Theresa C. Anderson\quad Bingyang Hu\quad Joris Roos}

	\address{Theresa C. Anderson \\ Purdue University \\150 N University St, West Lafayette, IN 47907, USA} \email{tcanderson@purdue.edu}

	\address{Bingyang Hu \\ Purdue University \\150 N University St, West Lafayette, IN 47907, USA}
	\email{hu776@purdue.edu}
	
    \address{Joris Roos \\ University of Massachusetts Lowell \\
    220 Pawtucket St, Lowell, MA 01854, USA}
    \email{jroos.math@gmail.com}
	
	\begin{abstract}
		We show that discrete singular Radon transforms along a certain class of polynomial mappings $P:\mathbb{Z}^d\to \mathbb{Z}^n$ satisfy sparse bounds. For $n=d=1$ we can handle all polynomials. In higher dimensions, we pose restrictions on the admissible polynomial mappings stemming from a combination of interacting geometric, analytic and number-theoretic obstacles. 
	\end{abstract}
	\subjclass[2020]{42B20, 42B15}
	\keywords{discrete Radon transform, singular Radon transform, sparse bounds}
	\date{August 4, 2020}
	\maketitle
	
	\section{Introduction}
	Let us consider the operator
	\begin{equation}\nonumber 
	T_P f(x) = \sum_{y\in\Z^d\setminus \{0\} } f(x+P(y)) K(y), \quad (x\in\Z^n)
	\end{equation}
	acting on functions $f:\Z^n\to \C$, where $P:\Z^d\to \Z^n$ is a polynomial mapping and $K$ a Calder\'{o}n--Zygmund kernel in $\R^d$ satisfying
	\begin{equation}\nonumber 
	|y|^d |K(y)|+|y|^{d+1} |\nabla K(y)| \le 1\quad \text{for all}\; |y| \ge 1
	\end{equation}
	and
	$$
	\sup_{\lambda \ge 1} \left| \int_{1 \le |y| \le \lambda} K(y) dy \right| \le 1.
	$$
	
	Ionescu and Wainger \cite{IW06} proved that $T_P$ is bounded $\ell^p(\Z^n)\to \ell^p(\Z^n)$ for all $p\in (1,\infty)$ (also see recent work of Mirek \cite{Mir15}). This extended earlier work of Stein and Wainger \cite{SW99}. Maximal truncations, vector-valued estimates and variation-norm estimates associated with $T_P$ have recently been studied by Mirek, Stein and Trojan \cite{MST15a}, \cite{MST15b}, introducing a number of key new ideas.  See also \cite{MSZ} for extensions beyond the Calder\'on-Zygmund setting.
		
	In this paper we consider sparse bounds for the operator $T_P$. Sparse bounds go back to Lerner's alternative proof of the $A_2$ theorem \cite{L13} and there have since been a number of important further developments. Here we use the concept of \emph{bilinear} sparse bounds from \cite{BFP}, \cite{CDO18}, \cite{L17}. 
	Recently, many new directions have been studied, including sparse domination of Radon transforms \cite{CO17}, \cite{O19}, \cite{H19}, discrete analogues, \cite{CKL16}, \cite{KL18} and operators on spaces of homogeneous type \cite{AV14}.
	
	As far as we are aware sparse bounds for our operator $T_P$ are only known in the case $n=d=1$, $P(y)=y^3$, $K(y)=1/y$, due to Culiuc-Kesler-Lacey \cite{CKL16}. The proof of \cite{CKL16} also works for the case $P(y)=y^k$ with no modifications to the argument if $k$ is odd (if $k$ is even, the corresponding operator vanishes identically).
	Our goal in this paper is to push the methods of \cite{CKL16} to prove sparse bounds for the operator $T_P$ for a large class of polynomial mappings $P$. Let us write
	\[ P(t) = \sum_{\alpha} c_\alpha t^\alpha, \]
	where the sum is over multiindices $\alpha\in\N_0^d$ and only finitely many of the $(c_\alpha)_\alpha$ are non-zero.
	
	\begin{samepage}
	\begin{defna} \label{admissible}
		We say that $P=(P_1,\dots,P_n)$ is \emph{admissible} if $P(0)=0$, $c_\alpha\in\Z^n$ for all $\alpha$ and the following conditions are satisfied:
		\begin{enumerate}
			\item [1.] \emph{Condition (\L).}
			There exist constants $\beta, L_0>0$ such that $|P(t)| \ge |t|^\beta$ for all $|t| \ge L_0$. 
			\item [2.] \emph{Condition (C).} For each $i \in \{1, \dots, n\}$ there exists a multiindex $\alpha^{(i)}$ with $|\alpha^{(i)}| = \deg P_i$ such that $c_\alpha$ equals the $i$th unit vector in $\R^n$.
		\end{enumerate}
	\end{defna}
	\end{samepage}
	We note that Condition (C) is always satisfied if $n=1$ (up to our normalizing assumptions that the leading coefficient of $P$ equals one and the constant term equals zero). Condition (\L) on the other hand holds always if $d=1$. In particular, the class of admissible polynomials includes all monic polynomials without constant term if $n=d=1$.
	
	Other examples of admissible $P$ include the monomial curves $(t,t^k)$ with $k\ge 1$ and the moment curve $(t,t^2,\dots,t^k)$. Another example is the ``universal'' case from Ionescu-Wainger \cite{IW06}, given by $[P(t)]_\alpha = t^\alpha$, where $\alpha$ ranges over all multiindices $\alpha\in\N_0^d$ with $0<|\alpha|\le D$, where $D$ is fixed. Ionescu and Wainger use the \emph{method of descent} \cite[Ch. XI]{Ste93} to reduce the case of a general $P$ to this case. It appears that the method of descent is not applicable to the sparse bounds we consider.

	We require Condition (\L) in order to ensure a lower bound on the sidelength of cubes, which also corresponds to a crucial step in the argument of \cite{CKL16}. Condition (\L) is related to {\L}ojasiewicz-type inequalities, which is a classical topic in real algebraic geometry. These are inequalities relating $|P(t)|$ to the distance of $t$ to the zero set $Z(P)=\{t\in\R^d\,:\,P(t)=0\}$. The type of {\L}ojasiewicz inequality that Condition (\L) represents has been shown to hold for various classes of real polynomials in \cite{DHPT14}.
	Note that Condition (\L) implies that $Z(P)$ is bounded. However, the reverse is not true: if $P$ has a bounded zero set, then it may still happen that $|P(t)|$ is small when $|t|$ is large. Consider for example, $P(t)=t_1^2 + t_2^2 (1-t_1t_2)^2$. Then $Z(P)=\{0\}$, but $|P(t)|\le 2$ on the unbounded set $\{t_1t_2=1,\,|t_1|\le 1\}$. 
	
	Condition (C) enters the analysis in several ways: it allows us to easily deduce the required decay for an exponential sum appearing in the major arc term and is also crucial for the proof of the error estimates that enable the analysis of the minor arc term.  Moreover, it implies that the associated map $\gamma(x,t) = x-P(t)$ satisfies H\"ormander's condition, i.e. that the associated (constant coefficient) vector fields span $\R^n$. This is needed for the application of sparse bounds for the real-variable operator from the second author's thesis \cite{H19}.

	To state our result we first introduce some terminology. We call a set $Q=\Z^n\cap (I_1\times \cdots\times I_n)$ with $I_1,\dots,I_n\subset\R$ intervals a \emph{$P$-cube} if
	\[ |I_1|^{1/D_1} = \cdots = |I_n|^{1/D_n}, \]
	where $D_i=\mathrm{deg}\,P_i$ and $|I_l|$ is the length of the interval $I_l$. In that case, we refer to $|I_1\cap \Z|^{1/D_1}$ as the {\em sidelength} of $Q$, denoted $\ell(Q)$. Note that $\ell(Q)\approx |I_i\cap\mathbb{Z}|^{1/D_i}$ for all $i=1,\dots,n$.  Here $|\cdot|$ denotes the counting measure.

	For fixed $\sigma\in(0,1)$ we say that a family $\mathcal{S}$ of $P$-cubes in $\Z^n$ is \emph{$\sigma$-sparse} if for every $Q\in\mathcal{S}$ there exists a set $E_Q\subset Q$ such that $|E_Q|\ge \sigma |Q|$ and the sets $(E_Q)_{Q\in\mathcal{S}}$ are pairwise disjoint. Given a sparse collection $\mathcal{S}$ we define an associated \emph{sparse form} $\Lambda_{r,s}^\mathcal{S}$ by
	\[ \Lambda^{{\mathcal{S}}}_{r, s}(f, g) = \sum_{Q\in\mathcal{S}} |Q| \langle f\rangle_{Q, r} \langle g\rangle_{Q, s} \]
	with $r,s\in [1,\infty]$, $\langle f\rangle_{Q, r} = \big(|Q|^{-1}\sum\limits_{x\in Q} |f(x)|^r\big)^{1/r}$ for $r<\infty$ and $\langle f\rangle_{Q,\infty} = \sup_{x\in Q} |f(x)|$. We may write $\Lambda_{r,s}$ instead of $\Lambda_{r,s}^\mathcal{S}$ when $\mathcal{S}$ is clear from context. Note that by H\"older's inequality we have $\Lambda_{r,s}(f,g)\le \Lambda_{\rho,\iota}(f,g)$ for every $\rho\in [r,\infty], \iota\in [s,\infty]$.
	
	The following is our main result.
	
	\begin{samepage}
	\begin{thma}\label{mainresult}
		Let $P:\Z^d\to \Z^n$ be admissible, $(\frac1r,\frac1s)$ sufficiently close to $(\frac12,\frac12)$ and $\sigma\in(0,1)$. Then there exists a constant $C=C(P,r,s,\sigma)$ such that for every finitely supported $f,g:\Z^n\to\C$ there exists a $\sigma$--sparse collection $\mathcal{S}$ in $\Z^n$ with
		\[ |\langle T_P f,g\rangle| \le C \Lambda^\mathcal{S}_{r,s}(f,g). \]
	\end{thma}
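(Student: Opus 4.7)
The plan is to follow the general template for sparse bounds of discrete operators established in \cite{CKL16}, adapted to the wider class of admissible $P$. I would first apply the Ionescu--Wainger circle-method decomposition \cite{IW06}, refined as in \cite{MST15a,MST15b}, to split the multiplier of $T_P$ at each dyadic scale $2^j$ into a \emph{major arc} contribution concentrated near rationals with denominators bounded by some slowly growing function of $j$, and a \emph{minor arc} remainder. The major arc piece should be expressible as a superposition of Gauss sum modulations of a smoothed continuous Radon transform, where the Gauss sums decay polynomially by virtue of Condition (C) (which supplies a pure monomial in each component of $P$).

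To extract a sparse bound, I plan to use the bilinear stopping-time argument of \cite{BFP,CDO18,L17} adapted to produce a collection of $P$-cubes rather than standard dyadic cubes. This requires two main ingredients: an $\ell^p$-improving estimate for the localized operator at scale $2^j$, with geometric decay in $j$; and a lower bound on the sidelength of cubes entering the stopping time. Condition (\L), which enforces $|P(t)|\gtrsim |t|^\beta$ for $|t|$ large, supplies this lower bound: the kernel of $T_P$ is effectively supported in a region that forces $\ell(Q)\gtrsim 1$, mirroring the role of the cubic $P(y)=y^3$ in \cite{CKL16}.

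For the major arc contributions, I would compare with the continuous singular Radon transform along $P$ and invoke the sparse bound from the second author's thesis \cite{H19}. Condition (C) is again crucial here, since it guarantees that the incidence relation $\gamma(x,t) = x-P(t)$ satisfies H\"ormander's condition, a standing hypothesis in \cite{H19}. The approximation error between the discrete and continuous operators is controlled at scales larger than the relevant denominators and can be absorbed into the sparse form by a telescoping over scales.

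The main obstacle is the minor arc estimate, which requires Weyl-type decay for the exponential sums $\sum_y K(y) e^{2\pi i \xi\cdot P(y)}$ uniformly in minor arc $\xi$. One must arrange the Weyl differencing so that the monomial guaranteed by Condition (C) survives the successive differences, and then combine this with interpolation against the trivial $\ell^\infty$ bound to produce $\ell^p$-improving estimates with a geometric gain in $j$. A secondary difficulty is producing a sparse family consisting of $P$-cubes rather than standard cubes, which requires reorganizing the maximal truncation and jump estimates of \cite{MST15a,MST15b} around the anisotropic geometry dictated by the degrees $D_1,\dots,D_n$. Summing the resulting minor and major arc sparse bounds over scales should yield the theorem, with $(\tfrac1r,\tfrac1s)$ confined to a neighborhood of $(\tfrac12,\tfrac12)$ whose radius is controlled by the Weyl decay exponent.
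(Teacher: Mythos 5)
Your high-level plan — circle method decomposition, Gauss sum decay from Condition (C), the real-variable sparse bound from \cite{H19} for the major arcs, $\ell^p$-improving with geometric decay for the minor arcs, and Condition (\L) for the lower bound on sidelengths — matches the paper's strategy. However, there is a genuine gap at the heart of the major arc analysis. After grouping rationals by denominator size, the major arc piece at level $k$ is a sum over roughly $2^{(n+1)k}$ fractions $a/q\in\mathcal{R}_k$, each contributing a modulated copy of (a truncation of) the continuous Radon transform multiplied by a Gauss sum. The Gauss sum decay $|S(a/q)|\lesssim 2^{-k/D_*}$ is far too weak to compensate for $\#\mathcal{R}_k\approx 2^{(n+1)k}$ if you simply invoke \cite{H19} for each rational and sum up naively. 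You need a way to exploit frequency orthogonality among the modulations $\chi_k(\cdot-a/q)$ so that the loss over $a/q$ is at worst $(\#\mathcal{R}_k)^{1/2}$ near $r=1,\infty$ and essentially zero at $r=2$, interpolated. The paper does this by recasting the sum over $a/q\in\mathcal{R}_k$ as a single inner product in a vector-valued space $\ell^2(\Z^n;\mathcal{H}_k)$ with $\dim\mathcal{H}_k=\#\mathcal{R}_k$, applying a Hilbert-space-valued version of the sparse bound from \cite{H19}, and then converting the resulting vector-valued sparse averages back to scalar ones via a Plancherel/orthogonality claim (essentially a square-function estimate over $\mathcal{R}_k$) plus a tail estimate; this is what forces $(\frac1r,\frac1s)$ near $(\frac12,\frac12)$. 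As written, your proposal does not contain a mechanism of this type, so the major arc contribution cannot be summed.

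A smaller point of divergence: you propose to reorganize ``the maximal truncation and jump estimates of \cite{MST15a,MST15b}'' around the anisotropic geometry and to run a bilinear stopping-time directly. The paper dispenses with both. The minor arc kernel at scale $j$, after the $\sup$-norm bound $|E_j(\xi)|\lesssim 2^{-\varepsilon' j}$ and crude derivative estimates, is split into a piece supported on a $P$-cube of sidelength $\approx 2^{(D+1)j}$ and a rapidly decaying tail; the former is converted to a sparse bound by a simple lemma turning $\ell^r\to\ell^{s'}$ operator norms for compactly supported kernels into $\mathrm{sp}(r,s)$ norms (a variant of \cite[Prop.\ 2.4]{CKL16}), and the latter is dominated pointwise by the $P$-maximal function $M_P$, which has a $(1,1)$-sparse bound. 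No variational or jump estimates are needed, and the stopping time is encapsulated in the cited black boxes rather than run afresh.
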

	\end{samepage}
	
	\noindent {\emph{Remark.} The result directly implies certain weighted $L^p$ estimates for the operator $T_P$ with weights in the intersection of an appropriate Muckenhoupt $A_p$ class and a reverse H\"older class $\mathrm{RH}_s$, because such bounds are known to hold for the bisublinear forms $\Lambda^\mathcal{S}_{r,s}$. This was proved by Bernicot, Frey and Petermichl in \cite[\S 6]{BFP}.  
	More specifically (choosing $r = s$ for simplicity), we have for instance:
	\begin{cor}
	If $r$ is such that $2-r>0$ is sufficiently small and $w, w^{-1} \in A_{\frac2r}\cap \mathrm{RH}_{\frac{r}{2-r}}$, then $T_P$ is bounded on $\ell^2(w)$.
	\end{cor}
    
	As far as we are aware, no non-trivial weighted bounds for $T_P$ were known previously other than for the case considered in \cite{CKL16}.}
	
	For the proof we use an appropriate major-minor arc decomposition of the Fourier multiplier associated with $T_P$. The major arc component encodes both arithmetic (discrete) and analytic (continuous) information. We closely follow the arguments of Culiuc, Kesler and Lacey \cite{CKL16} and aim to extend the arguments to the largest possible class of polynomial mappings.
	One difference is that our analysis is adapted to the anisotropic geometry of the map $P$; this  complication was not present in \cite{CKL16}.
	Another new difficulty when compared to \cite{CKL16} is that the corresponding  real-variable operator is no longer of Calder\'{o}n-Zygmund type. Instead, we need to make use of the recently developed general theory for sparse domination of Radon transforms on $\R^n$, see \cite{H19} (also see \cite{CO17} for an earlier result). 
	To this end, we also remark that the {\em pointwise} sparse domination paradigm of \cite{L13} does not apply.
	
	A difficult open question is to determine the sharp range of $r,s$ for which a sparse domination result as in Theorem \ref{mainresult} can hold, even in the simplest case of $n=d=1$, $P(t)=t^3$, $K(y)=1/y$.
	We stress that the present range of $(r,s)$ is not sufficient to recover the full $\ell^p$ boundedness result of Ionescu and Wainger.  One can calculate the range of $p$ that our result does give by viewing the last section of the paper where we make our range of $r$ and $s$ quantitative, with $r = p$ (and $s = p$) in the language of Ionescu and Wainger.  The explicit range of $p$ can be computed for certain operators, but not (currently) for all that we consider.  

	The range of $r,s$ that is produced by our proof is determined by three corresponding parts of the argument. The first restriction is from the known range for the corresponding continuous (real--variable) Radon transform (which is determined by its $L^p$-improving range, see \S \ref{sec:realvar}). Here the sharp range is known in some particular cases, but not in the generality we consider. The second restriction comes from the decay rate of a certain Weyl sum. Finally, the third input stems from the minor arc error term. The region of $(\frac1r,\frac1s)$ we obtain is explained in more detail in \S \ref{sec:quant}.
	
	In our definition of admissibility we have traded some generality for the sake of simplifying the argument. However, we presently do not see how to obtain sparse bounds for {\em every} polynomial mapping $P$. This could be an interesting topic for future investigations.

	\noindent \emph{Structure of this paper.} In \S \ref{sec:prelim} we introduce some notation and propositions that we will use; this includes an estimate of the exponential sums that arise in our analysis using Condition (C). In \S \ref{sec:approx} we perform the decomposition of our operator into major and minor arc terms and explain how Theorem \ref{mainresult} is proven. In \S \ref{sec:realvar} we use Condition (\L) to derive a variant of the real--variable sparse bound proven in \cite{H19}. This is a key ingredient for the major arc estimate.
	In \S \ref{sec:major} and \S \ref{sec:minor} we deal with the major and minor arc components of our operator, respectively. Finally, \S \ref{sec:quant} contains a more quantitative version of Theorem \ref{mainresult}.
	
	\noindent {\em Acknowledgements.} This research is supported by the NSF, in particular by NSF DMS-1502464 and NSF DMS 1954407 (T. C. Anderson). We thank the anonymous referees for helpful comments.
	
	\section{Preliminaries}\label{sec:prelim}
	\subsection{Notation}
	
	Throughout the text we make use of the notation $A\lesssim B$ to denote that there exists a constant $C>0$ such that $A\le C\cdot B$, where $C$ is always allowed to depend on the polynomial mapping $P$ (in particular on $d$ and $n$), but is independent of the coefficients $c_\alpha$.
	Let $e(t)=e^{2\pi i t}$. We say that a function $g:\R^n\to\C$ is {\em periodic} if $g(x+y)=g(x)$ for all $x\in\R^n$ and $y\in\Z^n$.
	For $f:\Z^n\to \C$ and periodic $g:\R^n\to\C$,
	\[ \mathcal{F} f(\xi) =\widehat{f}(\xi) = \sum_{x\in\Z^n} f(x) e(-\xi x)\quad (\xi\in\R^n) \]
	\[ \mathcal{F}^{-1} g(x) = \int_{[0,1]^n} g(\xi) e(\xi x) d\xi \quad (x\in\Z^n)\]
	We also fix a notation for the Fourier transform on $\R^n$,
	\[ \mathcal{F}_{\R^n} f(\xi) = \int_{\R^n} f(t) e(-\xi\cdot t) dt. \]
	For a positive integer $q$ and $a\in \Z^n$ we write
	\[ [q] = \{0,\dots, q-1\},\quad (a,q) = \mathrm{gcd}(a_1,a_2,\dots,a_n,q). \]
	By convention, the letter $q$ will always denote a positive integer and  $a/q\in\Q^n$ is always assumed to satisfy $(a,q)=1$.
	
	\subsection{Dyadic decomposition}
	Fix a smooth function $\psi$ supported in $\{1/2\le |x|\le 2\}\subset \R^d$ such that $0\le \psi\le 1$ and $\sum_{j\in\Z} \psi(2^{-j} x)=1$ for every $x\not=0$. Define
	\[ K_j(x) = \psi(2^{-j} x) K(x), \quad \Phi_j(\xi) = \int_{\R^d} e(P(t)\cdot \xi) K_j(t) dt, \]
	\[ m_j(\xi) = \sum_{y\in\Z^d} e(P(y)\cdot\xi) K_j(y). \]
	For a generic bounded periodic function $m:\R^n\to \C$ let us write $m(\nabla)$ to denote the corresponding Fourier multiplier operator acting on functions $f:\Z^n\to \C$ by
	\[\mathcal{F} (m(\nabla) f)(\xi) = m(\xi) \mathcal{F}(f)(\xi).\]
	
	\subsection{An exponential sum estimate}	
	Define for every $(a,q)=1$, the Weyl-type exponential sum
	
	\[ S(a/q) = q^{-d} \sum_{r\in [q]^d} e(P(r)\cdot a/q). \]
	
	We claim that for every $\varepsilon>0$,
	\begin{equation} \label{20190805eq04} 
	|S(a/q)| \lesssim_\varepsilon q^{-1/D_*+\varepsilon},
	\end{equation} 
	where $D_*=\max\{ \alpha_i\,:\,c_\alpha\not=0,\,i=1,\dots,d\}\le \deg\,P$.
	To see this note that the phase in the exponential sum is given by
	\[ P(r)\cdot a/q=\sum_{\alpha} \tfrac{b_\alpha}{q} r^\alpha, \]
	where we put $b_\alpha = c_\alpha\cdot a$. Condition (C) implies that the coefficient of $r^{\alpha^{(i)}}$ is $b_{\alpha^{(i)}}=a_i/q$; since $(a,q)=1$ it follows that $(b,q)=1$. A standard exponential sum estimate (more precisely, \cite[Theorem 2.6]{ACK}) then implies the claim.
	From an analytic number theory perspective, there are other ways to expand Condition (C) to possibly guarantee the exponential sum decay, but Condition (C) also appears in other places of the proof.

	\subsection{Sparse forms}
	
	Recall the following fundamental fact about sparse forms.
	
	\begin{lem}\label{lem:uniformsparse}
		Let $\sigma\in (0,1)$ and $1\le r,s<\infty$. There exists a constant $C>0$ and a $\sigma$-sparse form $\Lambda_{r,s}^*$ such that for every $\sigma$-sparse form $\Lambda_{r,s}$ we have
		\[ \Lambda_{r,s}(f,g) \le C \Lambda^*_{r,s}(f,g) \]
		for all compactly supported $f,g:\Z^n\to\C$.
	\end{lem}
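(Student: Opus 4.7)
\emph{Proof plan.} The plan is to build the universal sparse form $\Lambda^*_{r,s}$ from $f$ and $g$ alone via a principal-cubes (Calder\'on--Zygmund stopping-time) construction, preceded by a reduction from arbitrary $\sigma$-sparse collections of $P$-cubes to dyadic ones via finitely many shifted dyadic grids.

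First, I would construct finitely many shifted dyadic families $\mathcal{D}^{(1)},\dots,\mathcal{D}^{(N)}$ of $P$-cubes in $\Z^n$ with the covering property that every $P$-cube $Q$ is contained in some $\widetilde Q\in\bigcup_j \mathcal{D}^{(j)}$ with $|\widetilde Q|\lesssim|Q|$. This is the classical ``three shifted grids'' construction, adapted to the anisotropic side lengths $\ell(Q)^{D_1}\times\cdots\times\ell(Q)^{D_n}$ of $P$-cubes; one applies it coordinate-wise in each $I_i$. Splitting any $\sigma$-sparse $\mathcal{S}$ according to which $\mathcal{D}^{(j)}$ furnishes the covering cube reduces the problem, at the cost of a factor $N$ and a slight adjustment of the sparsity parameter, to the case $\mathcal{S}\subset\mathcal{D}$ for a single dyadic grid $\mathcal{D}$.

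Next, with $f,g$ compactly supported, I would fix a root cube $Q_0\in\mathcal{D}$ containing both supports and run the following stopping-time selection on $\mathcal{D}$: starting from $Q_0\in\mathcal{S}^*$, a descendant $Q'$ is added to $\mathcal{S}^*$ the first time either $\langle f\rangle_{Q',r}^r$ or $\langle g\rangle_{Q',s}^s$ exceeds $K$ times the corresponding average on its nearest $\mathcal{S}^*$-ancestor $Q^*$; the selection then restarts at $Q'$. Inside each stopping cube $Q^*$, the union of the maximal selected children is contained in $\{M_r f>K^{1/r}\langle f\rangle_{Q^*,r}\}\cup\{M_s g>K^{1/s}\langle g\rangle_{Q^*,s}\}$, so the weak $(r,r)$ and $(s,s)$ bounds for the dyadic maximal function give a bound of $(1-\sigma)|Q^*|$ provided $K=K(\sigma)$ is taken large enough. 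Hence $\mathcal{S}^*$ is $\sigma$-sparse.

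Finally, for $Q\in\mathcal{S}\subset\mathcal{D}$ let $\Pi(Q)\in\mathcal{S}^*$ be its smallest $\mathcal{S}^*$-ancestor; by construction $\langle f\rangle_{Q,r}\le K^{1/r}\langle f\rangle_{\Pi(Q),r}$ and analogously for $g$. Grouping $\mathcal{S}$ by $\Pi$,
\[ \Lambda^{\mathcal{S}}_{r,s}(f,g)\lesssim \sum_{Q^*\in\mathcal{S}^*}\langle f\rangle_{Q^*,r}\langle g\rangle_{Q^*,s}\sum_{Q\in\mathcal{S}:\,\Pi(Q)=Q^*}|Q|, \]
and the inner sum is at most $\sigma^{-1}|Q^*|$ by disjointness of the witnessing sets $E_Q\subset Q\subset Q^*$ of $\mathcal{S}$. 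Setting $\Lambda^*_{r,s}:=\Lambda^{\mathcal{S}^*}_{r,s}$ yields the claim with $C=C(\sigma,r,s)$. The main obstacle I foresee is only the first step: verifying the shifted-grid covering property in the anisotropic $P$-cube setting on $\Z^n$ (rather than $\R^n$), since one has to control the error from rounding intervals to integer points at every scale while maintaining $|I_1|^{1/D_1}=\cdots=|I_n|^{1/D_n}$. The remaining stopping-time portion is classical, modulo a careful calibration of $K$ versus $\sigma$ so that the same sparsity constant propagates through.
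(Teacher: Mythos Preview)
Your proposal is correct and is precisely the argument behind the cited reference. The paper does not give its own proof; it simply states that this is a variant of \cite[Lemma 4.7]{LA17} and that the proof there carries over. What you have written is essentially a reconstruction of that proof, with the necessary adaptation to the anisotropic $P$-cube geometry on $\Z^n$.

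The only point worth flagging is the one you already identified: the shifted-grid covering lemma for $P$-cubes. This does go through coordinatewise (one runs the three-shifts construction separately in each factor $I_i$ at scale $\ell(Q)^{D_i}$ and takes products), and the integer rounding is harmless because the paper's definition of $P$-cube only requires $|I_1|^{1/D_1}=\cdots=|I_n|^{1/D_n}$ for the ambient real intervals, with $\ell(Q)\approx|I_i\cap\Z|^{1/D_i}$ up to constants. So the obstacle you foresee is minor. One small comment on the statement's quantifiers: despite the phrasing, the universal form $\Lambda^*_{r,s}$ is allowed to depend on the pair $(f,g)$ (this is how the lemma is used later in the paper and is also how \cite{LA17} states it); your stopping-time construction, which builds $\mathcal{S}^*$ from $f$ and $g$, is therefore exactly what is required.
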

	This is a variant of \cite[Lemma 4.7]{LA17} and the proof given there carries over.
	
	For $r,s\in [1,\infty]$ and $T$ a sublinear operator mapping finitely supported functions on $\Z^n$ to functions on $\Z^n$, define the \emph{sparse operator norm $\|T\|_{\mathrm{sp}(r,s)}$} as the infimum over all $c\in (0,\infty)$ such that 
	\[ |\langle Tf,g\rangle_{\ell^2(\Z^n)}| \le c \sup \Lambda_{r,s}(f,g) \]
	holds for all compactly supported $f,g:\Z^n\to\C$, where the supremum is over all sparse forms $\Lambda_{r,s}$ (which is finite by Lemma \ref{lem:uniformsparse}). Note that this is with respect to a fixed sparsity parameter $\sigma\in (0,1)$ which does not appear in notation.	
	
	We define the maximal operator with respect to $P$-cubes as
	$$
	M_P f(x)=\sup_{Q} |Q|^{-1} \sum_{y \in Q} |f(x-y)|,
	$$
	where the supremum goes over all $P$-cubes $Q$.
	
	For a $P$-cube $Q$ we let $2Q$ denotes the smallest $P$-cube with the same center as $Q$ and sidelength at least twice the sidelength of $Q$. We have $|Q|\approx \ell(Q)^{D_1+\cdots+D_n}$, where $\ell(Q)$ denotes sidelength of $Q$ and $D_i=\mathrm{deg}\,P_i$. For $x\in Q$ and $y\not\in 2Q$ we have $|x-y|\gtrsim \ell(Q)$. The $P$-cube $2^\nu Q$ is defined inductively by $2^{\nu} Q= 2 (2^{\nu-1} Q)$.
	
	\begin{prop} \label{sparseHL}
		The operator $M_P$ satisfies
		$$
		\|M_P\|_{\mathrm{sp}(1,1)} \lesssim 1.
		$$
	\end{prop}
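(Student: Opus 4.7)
The plan is to adapt the standard sparse domination argument for Hardy--Littlewood type maximal operators to the anisotropic $P$-cube geometry. Since $M_P$ is positive and sublinear, it suffices to establish the pointwise sparse bound
\[ M_P f(x) \lesssim \sum_{Q \in \mathcal{S}} \langle f\rangle_{Q,1}\, \mathbf{1}_Q(x) \]
for finitely supported $f \ge 0$ and a $\sigma$-sparse collection $\mathcal{S}$ of $P$-cubes; pairing with $|g|$ and rearranging the resulting double sum then yields $|\langle M_P f, g\rangle| \lesssim \Lambda_{1,1}^{\mathcal{S}}(f,g)$, which is the claim.

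The first step would be a dyadic reduction. At sidelength $2^k$ the $P$-cubes have coordinate dimensions $2^{kD_1}\times\cdots\times 2^{kD_n}$ and naturally tile $\Z^n$; refining from scale $k$ to $k-1$ subdivides each such cube into $2^{D_1+\cdots+D_n}$ children. Applying the one-dimensional three-shift trick independently in each coordinate, with shifts of order $2^{kD_i}/3$ on the $i$th side, produces $3^n$ nested shifted grids $\mathcal{D}^{(1)},\dots,\mathcal{D}^{(3^n)}$ such that every $P$-cube $Q$ is contained in some $Q^* \in \mathcal{D}^{(j)}$ of comparable size. This yields $M_P f \lesssim \sum_j M_{\mathcal{D}^{(j)}} f$, reducing matters to the dyadic maximal operator on each grid. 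I expect this shift construction to be the main technical nuisance, though it is routine in view of the product structure of $P$-cubes (and the fact that integer-valued approximations of the ideal shifts $2^{kD_i}/3$ are harmless).

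For a fixed grid $\mathcal{D}$ I would construct $\mathcal{S}$ by the standard principal-cube stopping time. Choose a large $Q_0 \in \mathcal{D}$ containing $\mathrm{supp}(f)$, set $\mathcal{S}_0 = \{Q_0\}$, and inductively declare the children $\mathrm{ch}(Q)$ of each $Q\in\mathcal{S}_k$ to be the maximal dyadic sub-cubes $Q'\subsetneq Q$ with $\langle f\rangle_{Q',1} > K\langle f\rangle_{Q,1}$; set $\mathcal{S}_{k+1} = \bigcup_{Q \in \mathcal{S}_k}\mathrm{ch}(Q)$ and $\mathcal{S} = \bigcup_k \mathcal{S}_k$. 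Chebyshev's inequality gives $\sum_{Q'\in\mathrm{ch}(Q)}|Q'|\le K^{-1}|Q|$, so the sets $E_Q := Q\setminus\bigcup\mathrm{ch}(Q)$ are pairwise disjoint with $|E_Q|\ge(1-K^{-1})|Q|$, and choosing $K=K(\sigma)$ large enough forces $\mathcal{S}$ to be $\sigma$-sparse. If $x\in E_Q$, then any dyadic $R\subseteq Q$ containing $x$ cannot lie inside a child of $Q$, hence $\langle f\rangle_{R,1}\le K\langle f\rangle_{Q,1}$; this shows $M_{\mathcal{D}} f(x) \le K\langle f\rangle_{Q(x),1}$, where $Q(x)$ is the minimal element of $\mathcal{S}$ containing $x$. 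Summing across the disjoint $E_Q$ yields the pointwise sparse domination. Beyond the dyadic shift construction, the only ingredients used are the doubling of $P$-cubes ($|2Q|\lesssim|Q|$) and Chebyshev's inequality.
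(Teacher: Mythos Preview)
Your proposal is correct. The dyadic reduction via shifted $P$-cube grids together with the principal-cube stopping time is exactly the standard route to pointwise sparse domination of a Hardy--Littlewood type maximal operator, and the anisotropic geometry poses no real obstacle: in coordinate $i$ the dyadic $P$-intervals form a nested system with fixed branching factor $2^{D_i}$, so the one-third shift trick applies coordinatewise as you say, and the stopping-time argument only uses doubling and nestedness.

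The paper, however, does not carry this out. It simply observes that $(\Z^n,\rho,\text{counting measure})$, with $\rho$ the anisotropic quasi-norm whose balls are the $P$-cubes, is a space of homogeneous type, and then invokes the general pointwise sparse domination theorem for maximal operators in that setting (from \cite{A15}). So the paper's ``proof'' is a one-line citation. Your argument is what that citation unpacks to in this concrete case: the Christ--type dyadic cubes and the stopping-time selection in the homogeneous-type framework specialize precisely to your shifted $P$-grids and principal-cube construction. The tradeoff is that your version is self-contained and elementary, while the paper's is shorter and makes clear that nothing beyond the doubling property of $P$-cubes is being used.
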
 
	This is a variant of the standard sparse domination theorem for the Hardy-Littlewood maximal operator. It follows for example as a special case of the pointwise sparse domination theorem for maximal operators in spaces of homogeneous type, see \cite[Chapter 3]{A15}.
	
	We also need the following variant of \cite[Proposition 2.4]{CKL16}.
	\begin{prop} \label{prop:Kfinitesupport}
		Suppose that $K:\Z^n\to \C$ is supported on a $P$-cube $Q_*$ centered at the origin and write $T_K f=K*f$. Then for all $(\frac1r,\frac1s)\in [0,1]^2$,
		$$
		\|T_K\|_{\mathrm{sp}(r, s)} \lesssim_n   |Q_*|^{\tfrac{1}{r}+\tfrac{1}{s}-1}   \|T_K\|_{\ell^r \to \ell^{s'}}. $$
	\end{prop}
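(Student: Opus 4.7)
The plan is to exploit the compact support of $K$ by tiling $\Z^n$ into disjoint $P$-cubes of size comparable to $Q_*$, and then to apply the given $\ell^r \to \ell^{s'}$ operator norm locally on each cube. Fix a partition $\mathcal{Q}$ of $\Z^n$ into pairwise disjoint $P$-cubes of sidelength comparable to $\ell(Q_*)$, chosen so that $Q - Q_* \subset 2Q$ for every $Q \in \mathcal{Q}$. This inclusion follows from the definitions: if $\ell(Q) \asymp \ell(Q_*)$, the algebraic sum $Q - Q_*$ is an anisotropic box of dimensions $\lesssim \ell(Q_*)^{D_i}$ in coordinate $i$, which sits inside $2Q$ because $D_i \ge 1$.

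The first main step is the localization. Since $K$ is supported in $Q_*$, for every $x \in Q$ the sum $T_K f(x) = \sum_{y\in Q_*} K(y) f(x-y)$ sees $f$ only on $Q - Q_* \subset 2Q$. Decomposing $g = \sum_{Q \in \mathcal{Q}} g \mathbf{1}_Q$,
\[ \langle T_K f, g \rangle = \sum_{Q \in \mathcal{Q}} \langle T_K(f \mathbf{1}_{2Q}), g \mathbf{1}_Q \rangle. \]
Applying H\"older's inequality and the hypothesis $T_K : \ell^r \to \ell^{s'}$ to each term yields
\[ |\langle T_K(f \mathbf{1}_{2Q}), g \mathbf{1}_Q \rangle| \le \|T_K\|_{\ell^r \to \ell^{s'}} |2Q|^{1/r} |Q|^{1/s} \langle f \rangle_{2Q, r} \langle g \rangle_{Q, s}. \]
Since $|Q| \approx |2Q| \approx |Q_*|$, summing and pulling out $|Q_*|^{1/r+1/s-1}$ gives
\[ |\langle T_K f, g \rangle| \lesssim \|T_K\|_{\ell^r \to \ell^{s'}} |Q_*|^{1/r+1/s-1} \sum_{Q \in \mathcal{Q}} |Q| \langle f \rangle_{2Q, r} \langle g \rangle_{Q, s}. \]

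The final step is to recognize the sum as a sparse form. Using $\|g \mathbf{1}_Q\|_s \le \|g \mathbf{1}_{2Q}\|_s$ to replace $\langle g \rangle_{Q,s}$ by $\langle g \rangle_{2Q,s}$ at the cost of a constant, the sum is bounded by $\sum_{Q \in \mathcal{Q}} |2Q| \langle f \rangle_{2Q, r} \langle g \rangle_{2Q, s}$, which is $\Lambda^{\mathcal{S}}_{r,s}(f,g)$ for $\mathcal{S} := \{2Q : Q \in \mathcal{Q}\}$. The family $\mathcal{S}$ is sparse: the major subsets $E_{2Q} := Q$ are pairwise disjoint by the partition property, and $|E_{2Q}| = |Q| \gtrsim |2Q|$, so $\mathcal{S}$ is $\sigma_0$-sparse for a specific $\sigma_0 = \sigma_0(n) > 0$. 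For the fixed $\sigma \in (0,1)$ implicit in $\|\cdot\|_{\mathrm{sp}(r,s)}$, one either takes $\sigma \le \sigma_0$ (in which case $\sigma_0$-sparse implies $\sigma$-sparse) or invokes a standard subdivision argument to adjust the sparseness constant.

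The argument is essentially bookkeeping; the only substantive issue is geometric, namely verifying $Q - Q_* \subset 2Q$ and choosing $\mathcal{Q}$ to respect the anisotropy of the $P$-cube structure. Once the partition is in place, the support of $K$ does all the work and the claim follows from H\"older applied to each localized piece.
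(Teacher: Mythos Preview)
Your argument is correct and follows essentially the same route as the paper's own proof: tile $\Z^n$ by translates of $Q_*$, use the support condition to localize $f$ to $Q-Q_*$, apply H\"older and the $\ell^r\to\ell^{s'}$ bound on each piece, and observe that the resulting dilated cubes form a sparse family. The paper uses the cruder containment $Q-Q_*\subset 4Q$ rather than your $2Q$, but otherwise the structure and all the ideas are the same.
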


	\begin{proof}
		Let $\mathcal{S}$ be a finitely overlapping partition of $\Z^n$ consisting of translates of $Q_*$. Then
		\[ |\langle T_K f, g\rangle|  \lesssim \sum_{Q\in\mathcal{S}} \big|\sum_{x\in Q} \sum_{y\in\Z^n} K(x-y) f(y) g(x)\big|.\]
		Since $K$ is supported on $Q_*$ we have $y\in Q-Q_*$ if the summand is non-zero, so the previous equals
		\[ \sum_{Q\in\mathcal{S}} |\langle T_K (f\mathbf{1}_{Q-Q_*}), g\mathbf{1}_{Q}\rangle| \le \|T_{K}\|_{\ell^r\to \ell^{s'}} \sum_{Q\in\mathcal{S}} \|f\mathbf{1}_{Q-Q_*}\|_{r} \|g\mathbf{1}_Q\|_s, \]
		where we have used H\"older's inequality to estimate the inner product and the definition of the operator norm $\|T_K\|_{\ell^r\to \ell^{s'}}$.
		Since $Q_*$ is centered at the origin and $Q$ and $Q_*$ are $P$-cubes of equal sidelength we have $Q-Q_*\subset 4Q$. Hence the previous display is
		\[ \le \|T_K\|_{\ell^r\to \ell^{s'}} \sum_{Q\in\mathcal{S}} |4Q|^{1/r} |Q|^{1/s} \langle f\rangle_{4Q,r} \langle g\rangle_{Q,s}, \]
		which is
		\[ \lesssim \|T_K\|_{\ell^r\to \ell^{s'}} |Q_*|^{\frac1r+\frac1s-1} \sum_{Q\in\mathcal{S}} |Q| \langle f\rangle_{4Q,r} \langle g\rangle_{4Q,s}.\]
		Since $4\mathcal{S}=\{4Q\,:\,Q\in\mathcal{S}\}$ is a sparse collection (with adjustable sparsity by writing it as a finite union of sparser collections), we obtain the claim.
	\end{proof}

	\section{Multiplier approximations}\label{sec:approx}
	\label{expsum:section}
	The first step towards Theorem \ref{mainresult} is an approximation for the multipliers $m_j(\xi)$. 
	
	Let $\delta\in (0,\tfrac1{100})$ be a sufficiently small parameter and define the {\em major arc} corresponding to $a/q\in\Q^n$ by
	\begin{equation} \label{20190921eq03}
	\mathfrak{M}_j(a/q) = \left\{ \xi\in\R^n\,:\,|\xi_i-a_i/q|\le 2^{-(D_i-1)j} 2^{-\delta j}\;\text{for all}\;1\le i \le n \right\},
	\end{equation}
	where $D_i=\deg\,P_i$. We need another parameter that governs the size of denominators. Say that $\delta'\in (0,\tfrac1{10} \delta)$ and
	\[ \mathfrak{M}_j = \bigcup_{(a,q)=1,\,q\le 2^{\delta' j}} \mathfrak{M}_j(a/q),  \quad j \ge 1. \]
	The major arcs appearing in this union are pairwise disjoint. 
	
	\begin{lem}\label{lem:approx}
		For any $j \ge 1$, if $\xi\in\mathfrak{M}_j(a/q)$, $q\le 2^{\delta'j}$ then
		\[ m_j(\xi) = S(a/q)\Phi_j(\xi-a/q) + O(2^{-\varepsilon j}), \]
		where $\varepsilon=\delta-\delta'\in (0,1)$.
	\end{lem}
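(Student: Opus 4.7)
The plan is to split the sum over $y \in \mathbb{Z}^d$ into residue classes modulo $q$ and then replace the resulting inner sums by integrals, using that on $\mathfrak{M}_j(a/q)$ the perturbation $\eta := \xi - a/q$ is small in an anisotropically scaled sense. Writing $y = qn + r$ with $r \in [q]^d$ and $n \in \mathbb{Z}^d$, the fact that $P$ has integer coefficients and $P(0)=0$ gives $P(qn+r) \equiv P(r) \pmod{q}$ componentwise, so
\[ e(P(y)\cdot \xi) = e(P(r)\cdot a/q)\, e(P(qn+r)\cdot \eta). \]
This factors $m_j(\xi)$ as
\[ m_j(\xi) = \sum_{r\in [q]^d} e(P(r)\cdot a/q) \sum_{n\in\mathbb{Z}^d} F_r(n), \qquad F_r(t) := e(P(qt+r)\cdot\eta)\, K_j(qt+r). \]
A change of variables $u = qt + r$ shows $\int_{\mathbb{R}^d} F_r(t)\,dt = q^{-d}\Phi_j(\eta)$, so $\sum_r e(P(r)\cdot a/q)\int F_r = S(a/q)\Phi_j(\xi - a/q)$, which is exactly the claimed main term. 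Thus it remains to control
\[ \sum_{r \in [q]^d} \bigl| {\textstyle \sum_{n\in\mathbb{Z}^d}} F_r(n) - {\textstyle \int_{\mathbb{R}^d}} F_r(t)\,dt \bigr|. \]

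For each $r$ I would invoke the standard Riemann-sum bound $|\sum_n F_r(n) - \int F_r|\lesssim \int_{\mathrm{supp}\,F_r}|\nabla F_r|$. The support of $F_r$ as a function of $t$ sits in a set of volume $\lesssim (2^j/q)^d$, since $K_j$ is supported where $|qt+r|\sim 2^j$. Now the main obstacle, and the key step, is the gradient estimate
\[ |\nabla F_r(t)| \lesssim q \cdot 2^{-\delta j} \cdot 2^{-jd}, \]
which is the place where the definition of $\mathfrak{M}_j(a/q)$ enters. Indeed, the chain rule produces a factor of $q$ in front of $\nabla_u [e(P(u)\cdot\eta) K_j(u)]$; the kernel part satisfies $|\nabla K_j(u)|\lesssim 2^{-j(d+1)}$ and $|K_j(u)|\lesssim 2^{-jd}$ by the Calderón--Zygmund bounds, while the phase contributes
\[ \Bigl|\sum_{i=1}^n \eta_i \nabla P_i(u)\Bigr| \lesssim \sum_i |\eta_i|\cdot 2^{(D_i-1)j} \lesssim 2^{-\delta j}, \]
using $|u|\lesssim 2^j$ together with the defining inequality $|\eta_i|\le 2^{-(D_i-1)j}2^{-\delta j}$ of $\mathfrak{M}_j(a/q)$. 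It is precisely this cancellation between the polynomial growth of $\nabla P_i$ and the anisotropic shrinkage of the major arc that delivers the gain of $2^{-\delta j}$.

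Combining these estimates, each $r$ contributes an error of size $\lesssim (2^j/q)^d \cdot q \cdot 2^{-jd-\delta j} = q^{1-d}\, 2^{-\delta j}$. Summing over $r\in [q]^d$ yields a total error of $\lesssim q\cdot 2^{-\delta j}$, and the hypothesis $q\le 2^{\delta' j}$ bounds this by $2^{-(\delta-\delta')j} = 2^{-\varepsilon j}$, which is the desired conclusion. No further input is needed beyond the integrality of the coefficients of $P$ (to separate the phase) and the anisotropic size of $\eta$ on $\mathfrak{M}_j(a/q)$; Conditions (\L) and (C) play no role in this particular lemma.
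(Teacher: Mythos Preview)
Your proof is correct and follows essentially the same approach as the paper: decompose $y$ into residue classes modulo $q$, separate the arithmetic phase $e(P(r)\cdot a/q)$, and compare the remaining sum to the integral $\Phi_j(\eta)$ via the mean value theorem, using the anisotropic major arc bound $|\eta_i|\le 2^{-(D_i-1)j-\delta j}$ to control the phase derivative and the Calder\'on--Zygmund bounds for the kernel. The only cosmetic difference is that the paper applies the mean value theorem to the phase and to the kernel in two separate steps, whereas you package both into a single gradient bound for $F_r$; the resulting error $q\cdot 2^{-\delta j}\le 2^{-\varepsilon j}$ is identical.
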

	In its essence this lemma goes back to Bourgain \cite{Bourgain}. We postpone the standard proof of this to the end of this section. 
	
	A key step is now to dyadically group rationals according to the size of their denominator. Define for every integer $k\ge 1$,
	\[ \mathcal{R}_k = \{ a/q \in \Q^n\cap [0,1)^n\,:\,(a,q)=1, 2^{k-1}\le q< 2^k \}. \]
	Let $\chi$ be a smooth function supported on $\{\xi\in\R^n\,:\,|\xi|\le 2\}$ that equals one on $\{\xi\in\R^n\,:\,|\xi|\le 1\}$.
	Define for $j\ge 1$ and $k\ge 1$ the periodic function,
	\[ L_{j,k}(\xi) = \sum_{a/q\in\mathcal{R}_k} S(a/q) \Phi_j(\xi-a/q) \chi_k(\xi-a/q), \]
	where $\chi_k$ is the periodic function with $\chi_k(\xi)=\chi(2^{10 k} \xi )$ for all $\xi\in [0,1)^n$. For every $\xi\in\R^n$ at most one of the summands is non-zero (if two different $a/q$ were both in the support of $\chi_k$, then we get a contradiction since $q \in [2^{k-1}, 2^k]$). Moreover, the major arcs $\mathfrak{M}_j(a/q)$ for the rationals $a/q\in\mathcal{R}_k$ are comfortably contained in the support of $L_{j,k}$. More precisely, for $\xi~\in~\mathfrak{M}_j(a_*/q_*)$ and $a_*/q_*\in\mathcal{R}_{k_*}$ we have
	\[ L_{j,k_*}(\xi) = S(a_*/q_*) \Phi_j(\xi-a_*/q_*). \]
	Next let
	\[ L_{j}= \sum_{1\le k\le j\delta'} L_{j, k},\quad L^{(k)} = \sum_{j\ge k/\delta'} L_{j,k}. \]
	Observe that the supports of $L_{j,k}$ for different $k$ may overlap. 	Motivated by Lemma \ref{lem:approx} write
	\[ m_j = L_j + E_j \]
	and we take this as the definition of the error term $E_j$.
	With this in mind, Theorem \ref{mainresult} follows from the following two results.
	
	\begin{prop}\label{prop:main} Suppose $(\frac1r,\frac1s)$ is sufficiently close to $(\frac12,\frac12)$. Then there exists $\gamma>0$ such that
		\[ \|L^{(k)} (\nabla)\|_{\mathrm{sp}(r,s)} \lesssim 2^{-\gamma k} \]
		for all $k\ge 1$.
	\end{prop}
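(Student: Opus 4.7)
The plan is to prove a scale-by-scale bound $\|L_{j,k}(\nabla)\|_{\mathrm{sp}(r,s)} \lesssim 2^{-\gamma_1 k} 2^{-\gamma_2 j}$ with $\gamma_1, \gamma_2 > 0$ for each $j \ge k/\delta'$, and then sum in $j$; by subadditivity of the sparse operator norm this produces the claimed $2^{-\gamma k}$ decay for $L^{(k)}(\nabla)$.

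Fix $j \ge k/\delta'$. Since $\chi_k$ is supported at frequency scale $2^{-10k}$, much finer than the scale $2^{-j}$ on which $\Phi_j$ varies (because $\delta' < \tfrac1{10}$), the function $\mathcal{F}^{-1}(\Phi_j \chi_k)$ is essentially the real-variable Radon-type kernel $(P)_* K_j$ smoothed at spatial scale $2^{10k} \ll 2^j$, and so is supported (up to rapidly decaying tails absorbable by standard truncation) in a $P$-cube of sidelength $\sim 2^j$. Consequently $\mathcal{F}^{-1} L_{j,k}$, which is a sum of modulations of $\mathcal{F}^{-1}(\Phi_j \chi_k)$, lives in a $P$-cube $Q_*$ of sidelength $\sim 2^j$, and Proposition \ref{prop:Kfinitesupport} reduces the problem to an operator-norm bound:
\[ \|L_{j,k}(\nabla)\|_{\mathrm{sp}(r,s)} \lesssim 2^{j(D_1+\cdots+D_n)(1/r+1/s-1)} \, \|L_{j,k}(\nabla)\|_{\ell^r \to \ell^{s'}}. \]
I would estimate the right-hand norm by interpolating two extremes. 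On the one hand, by Plancherel, the pairwise-disjoint frequency supports of the summands defining $L_{j,k}$, and the Weyl estimate \eqref{20190805eq04},
\[ \|L_{j,k}(\nabla)\|_{\ell^2 \to \ell^2} = \|L_{j,k}\|_{L^\infty} \lesssim 2^{-k/D_*}. \]
On the other hand, fix an endpoint $(1/r_0, 1/s_0)$ near $(1/2,1/2)$ and write $L_{j,k}(\nabla) = \sum_{a/q \in \mathcal{R}_k} S(a/q)\, M_{a/q} \tilde T_j M_{-a/q}$, where $M_{a/q}f(x) = e(x \cdot a/q)f(x)$ and $\tilde T_j$ is the multiplier with symbol $\Phi_j \chi_k$. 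The triangle inequality combined with $|\mathcal{R}_k| \lesssim 2^{(n+1)k}$ and a transference comparison $\|\tilde T_j\|_{\ell^{r_0} \to \ell^{s_0'}} \lesssim \|T_j^{\R}\|_{L^{r_0} \to L^{s_0'}} \lesssim 2^{-j\tau}$ (with $\tau = \tau(r_0,s_0) > 0$ in the improving range of \S\ref{sec:realvar}) yields the crude estimate
\[ \|L_{j,k}(\nabla)\|_{\ell^{r_0} \to \ell^{s_0'}} \lesssim 2^{(n+1-1/D_*)k} \, 2^{-j\tau}. \]
Riesz--Thorin interpolation with parameter $\theta\in (0,1)$ between $(1/2,1/2)$ and $(1/r_0, 1/s_0')$ then produces a bound of the form $2^{-k(1/D_* - \theta(n+1))} \, 2^{-j\theta\tau}$. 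Choosing $\theta$ small enough that $\theta(n+1) < 1/D_*$ yields $k$-decay, and combining with the above display the $j$-exponent of the final sparse estimate is $\theta\bigl[(D_1+\cdots+D_n)(1/r_0+1/s_0-1) - \tau\bigr]$, which is negative precisely when $(r_0, s_0)$ lies in the $L^p$-improving sparse range from \S\ref{sec:realvar}.

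The main obstacle is this interpolation step. Since $\mathcal{R}_k$ contains $\sim 2^{(n+1)k}$ rationals, the crude triangle-inequality bound alone would overwhelm the Weyl-type gain $2^{-k/D_*}$. The sharp $\ell^2 \to \ell^2$ estimate, available thanks to the disjointness of the frequency supports $\chi_k(\cdot - a/q)$, is precisely what permits Riesz--Thorin to recover a fraction of the Weyl gain at an off-diagonal endpoint, at the cost of restricting $(1/r, 1/s)$ to a neighborhood of $(1/2, 1/2)$. The precise admissible range is then determined by the simultaneous constraints coming from this Weyl-sum interpolation and the $L^p$-improving range of \S\ref{sec:realvar}, as will be detailed in \S\ref{sec:quant}.
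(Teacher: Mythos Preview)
Your scale-by-scale strategy has a genuine gap: it cannot produce any decay in $j$, so the sum $\sum_{j\ge k/\delta'}$ diverges. The problem is the implicit claim that $\tau>(D_1+\cdots+D_n)(1/r_0+1/s_0-1)$. By anisotropic scaling the continuous single-scale operator satisfies
\[
\|T_j^{\R}\|_{L^{r_0}\to L^{s_0'}}\;\approx\;2^{-j(D_1+\cdots+D_n)(1/r_0+1/s_0-1)}
\]
with \emph{equality} of exponents (dilation is an isometry), so the best possible $\tau$ equals $(D_1+\cdots+D_n)(1/r_0+1/s_0-1)$ exactly, and your $j$-exponent $\theta[(D_1+\cdots+D_n)(1/r_0+1/s_0-1)-\tau]$ is identically zero. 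Said differently: each dyadic piece of a singular Radon transform has sparse norm $\approx 1$ by scale invariance, so $\|L_{j,k}(\nabla)\|_{\mathrm{sp}(r,s)}$ is bounded below uniformly in $j$, and subadditivity yields only $\sum_j 1=\infty$. This is exactly why Proposition~\ref{prop:Kfinitesupport} is invoked in the paper only for the minor arc term $E_j$, where Lemma~\ref{lem:error} supplies genuine extra decay $2^{-\varepsilon' j}$ on top of scaling.

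The paper's argument avoids this obstruction by \emph{not} splitting in $j$. It rewrites $\langle L^{(k)}(\nabla)f,g\rangle$ as $\langle \mathcal{T}_{P,K^{(k)}}F^\sharp,G\rangle_{\ell^2(\Z^n;\mathcal{H}_k)}$ with $K^{(k)}=\sum_{j\ge k/\delta'}K_j$ and $F^\sharp,G$ certain $\mathcal{H}_k$-valued functions obtained from $f,g$ by frequency projection near the rationals in $\mathcal{R}_k$. The vector-valued real-variable sparse bound of Proposition~\ref{prop:Hilbertsparse} is then applied once to the full (already $j$-summed) operator, producing a sparse form $\Lambda_{r,s}^{\mathcal{S}}(F^\sharp,G)$ whose $P$-cubes all have sidelength $\gtrsim 2^{11k}$; this lower bound is where Condition~(\L) enters. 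The Weyl decay $2^{-k/D_*}$ is then extracted pointwise, and the passage from averages of $F^\sharp,G$ back to averages of $f,g$ is handled by interpolating the $L^2$ orthogonality of the cutoffs $\chi_k(\cdot-a/q)$ against trivial $L^1,L^\infty$ bounds, with the large-sidelength condition guaranteeing that the tails $f\mathbf{1}_{(2Q)^c}$ are negligible. The multi-scale stopping-time machinery inside Proposition~\ref{prop:Hilbertsparse} is essential here and cannot be replaced by a sum of single-scale estimates.
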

	The proof of this constitutes the core of the paper and is contained in \S \ref{sec:major}.

	\begin{prop}\label{prop:err} Suppose $(\frac1r,\frac1s)$ is sufficiently close to $(\frac12,\frac12)$. Then there exists $\gamma>0$ such that
		\[ \|E_j(\nabla)\|_{\mathrm{sp}(r,s)} \lesssim 2^{-\gamma j} \]
		for all $j\ge 1$.
	\end{prop}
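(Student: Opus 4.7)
The plan is to bound $\|E_j(\nabla)\|_{\ell^r\to\ell^{s'}}$ with exponential decay in $j$ for $(\tfrac1r,\tfrac1s)$ near $(\tfrac12,\tfrac12)$, and to combine this with the observation that the convolution kernel of $E_j(\nabla)$ is essentially supported on a $P$-cube $Q_*$ of sidelength $\sim 2^j$, so that Proposition~\ref{prop:Kfinitesupport} can be applied.

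The first step is a multiplier bound $\|E_j\|_{L^\infty([0,1]^n)} \lesssim 2^{-\eta j}$ for some $\eta>0$, which yields $\|E_j(\nabla)\|_{\ell^2\to\ell^2}\lesssim 2^{-\eta j}$. On the major arcs $\mathfrak{M}_j$ this is immediate from Lemma~\ref{lem:approx}, since $L_j$ was built precisely to match the leading term of the approximation. Off $\mathfrak{M}_j$ one estimates $m_j$ and $L_j$ separately: $|m_j(\xi)|$ decays by a standard Weyl-type inequality (Condition~(C) enters here, ensuring that the relevant coefficients of the phase inherit the rational approximation of $\xi$), since any approximation of $\xi$ by a rational must have denominator exceeding $2^{\delta' j}$; meanwhile, the cutoffs $\chi_k(\xi-a/q)$ restrict each $k$ to at most one rational $a/q\in\mathcal{R}_k$, and outside the corresponding major arc $|\xi_i-a_i/q|\ge 2^{-(D_i-1)j-\delta j}$ in some coordinate $i$, so that a van~der~Corput estimate on $\Phi_j$ combined with \eqref{20190805eq04} produces decay by a fractional power of $2^j$.

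The second step records elementary $\ell^p\to\ell^p$ bounds for $E_j(\nabla)$ of at most polynomial growth in $j$: the convolution kernel of $m_j(\nabla)$ has $\ell^1$ norm $\lesssim 1$ because $\sum_y|K_j(y)|\lesssim 1$, and each summand of $L_j(\nabla)$ can be controlled similarly, with polynomially many terms in $k\le \delta' j$ and $a/q$. Riesz--Thorin interpolation between these and step one yields $\|E_j(\nabla)\|_{\ell^r\to\ell^{s'}}\lesssim 2^{-\eta_0 j}$ for some $\eta_0>0$ whenever $(\tfrac1r,\tfrac1s)$ is close enough to $(\tfrac12,\tfrac12)$.

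The third step localizes the kernel and invokes Proposition~\ref{prop:Kfinitesupport}. The kernel $\tilde K_j$ of $m_j(\nabla)$ is exactly supported in a $P$-cube $Q_*$ of sidelength $\sim 2^j$, since $P(y)\in Q_*$ whenever $|y|\sim 2^j$. The kernel of $L_j(\nabla)$ is not exactly supported on $Q_*$, but since each factor $\Phi_j\chi_k$ is $C^\infty$ at frequency scale $2^{-10k}$ with $k\le\delta' j\le j/100$, the Schwartz tail of the Fourier inverse decays faster than any polynomial outside a constant dilate of $Q_*$. We split $E_j(\nabla)=A_j+B_j$ where $A_j$'s kernel is supported in (a constant dilate of) $Q_*$ and inherits the $\ell^r\to\ell^{s'}$ bound from step two, while $B_j$'s kernel is $O(2^{-Nj})$ pointwise for arbitrarily large $N$ and is absorbed via pointwise domination by $2^{-Nj}M_P$ together with Proposition~\ref{sparseHL}. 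Proposition~\ref{prop:Kfinitesupport} applied to $A_j$ gives
\[ \|A_j\|_{\mathrm{sp}(r,s)} \lesssim |Q_*|^{\frac1r+\frac1s-1}\,\|A_j\|_{\ell^r\to\ell^{s'}} \lesssim 2^{j(D_1+\cdots+D_n)|\frac1r+\frac1s-1|}\cdot 2^{-\eta_0 j}, \]
which is $\le 2^{-\gamma j}$ for some $\gamma>0$ as soon as $(\tfrac1r,\tfrac1s)$ is sufficiently close to $(\tfrac12,\tfrac12)$. The main obstacle is step one: obtaining Weyl-type decay for $m_j$ off the major arcs together with matching decay for $L_j$ adapted to the anisotropic scales $(2^{D_1 j},\ldots,2^{D_n j})$, where Condition~(C) (via \eqref{20190805eq04}) plays an essential role in the multidimensional setting.
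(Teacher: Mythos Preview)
Your three-step strategy mirrors the paper's proof: establish the uniform multiplier bound $\|E_j\|_\infty\lesssim 2^{-\eta j}$ (the paper's Lemma~\ref{lem:error}, with the same major/minor arc split and the same roles for Lemma~\ref{lem:approx}, van~der~Corput on $\Phi_j$, and the Weyl-type bound on $m_j$), then obtain $\ell^r\to\ell^{s'}$ bounds by interpolation, then localize the kernel and invoke Proposition~\ref{prop:Kfinitesupport} for the main piece and $M_P$-domination for the tail. Two small inaccuracies: on a major arc $\xi\in\mathfrak{M}_j(a_*/q_*)$ the bound is not quite immediate, since after Lemma~\ref{lem:approx} matches $m_j$ with $L_{j,k_*}$ one must still control the off-terms $\sum_{k\ne k_*}|L_{j,k}(\xi)|$ via the same van~der~Corput estimate; and the number of rationals in $\bigcup_{k\le\delta' j}\mathcal{R}_k$ is $\approx 2^{(n+1)\delta' j}$, exponential in $j$ with small rate rather than polynomial (harmless for the interpolation, but worth stating correctly). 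Note also that the interpolation in step~2 as written yields only $\ell^p\to\ell^p$; to get the $\ell^r\to\ell^{s'}$ bound used in step~3 when $\tfrac1r+\tfrac1s>1$ (so $s'>r$) you are implicitly using the embedding $\ell^r(\Z^n)\hookrightarrow\ell^{s'}(\Z^n)$, while the region $\tfrac1r+\tfrac1s<1$ (where $\|A_j\|_{\ell^r\to\ell^{s'}}$ is typically infinite) must instead be recovered by the monotonicity $\Lambda_{r,s}\le\Lambda_{\rho,\iota}$ for $\rho\ge r$, $\iota\ge s$.

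The one substantive difference from the paper is your choice of $Q_*$ with sidelength $\sim 2^j$, obtained by exploiting that the kernel of $m_j(\nabla)$ is exactly supported on $\{P(y):|y|\sim 2^j\}$ and that the Schwartz tails of the $\chi_k$-localized pieces of $L_j$ are negligible beyond that scale. The paper takes a more uniform route: it treats $E_j$ as a single multiplier, records the crude derivative bound $|\partial_\alpha E_j|\lesssim 2^{(\alpha\cdot\underline{D})j}$, and integrates by parts to get $|\mathcal{F}^{-1}E_j(x)|\lesssim 2^{NDj}\rho(x)^{-N}$, which forces the larger choice $\ell(Q_*)\sim 2^{(D+1)j}$. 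For step~2 the paper also exploits the off-diagonal endpoint $\ell^1\to\ell^\infty$ with decay $2^{-\eta j}$ (immediate from the pointwise kernel bound $\|\mathcal{F}^{-1}E_j\|_{\ell^\infty}\le\|E_j\|_{L^\infty}$) rather than your diagonal $\ell^1\to\ell^1$ bound. Either route suffices; yours gives a tighter $|Q_*|$ at the cost of handling $m_j$ and $L_j$ separately, while the paper's is more streamlined.
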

	The proof of this proposition is contained in \S \ref{sec:minor}. Theorem \ref{mainresult} now follows because for $(\frac1r,\frac1s)$ in the intersection of the two regions in Propositions \ref{prop:main} and \ref{prop:err} we have
	\[ \|\sum_{j\ge 1} m_j(\nabla)\|_{\mathrm{sp}(r,s)} \le \sum_{k\ge 1} \| L^{(k)}(\nabla)\|_{\mathrm{sp}(r,s)} + \sum_{j\ge 1} \|E_j(\nabla)\|_{\mathrm{sp}(r,s)}<\infty.\]

	\begin{proof}[Proof of Lemma \ref{lem:approx}]
		For every $y\in\Z^d$ there exist unique $u\in\Z^d, r\in [q]^d$ such that $y=uq+r$. Thus, 
		\[ m_j(\xi) = \sum_{y\in\Z^d} e(P(y)\cdot a/q) e(P(y)\cdot (\xi-a/q)) K_j(y)= q^{-d} \sum_{r\in [q]^d} e(P(r)\cdot a/q) I_{q,r}(\xi), \]
		where
		\[ I_{q,r}(\xi) = q^d \sum_{u\in \Z^d} e(P(uq+r)\cdot (\xi-a/q)) K_j(uq+r). \]
		It suffices to show that for $r\in [q]^d$,
		\[ I_{q,r}(\xi) = \Phi_j(\xi-a/q) + O(2^{-\varepsilon j}). \]
		Write $\eta=\xi-a/q$. A change of variables gives
		\[ \Phi_j(\eta) = \int_{\R^d} e(P(t)\cdot \eta) K_j(t) dt = q^d \int_{\R^d} e(P(qt+r)\cdot \eta) K_j(qt+r) dt,  \]
		which can be further rewritten as
		\begin{equation}\label{eqn:approx_pf0}
		q^d \sum_{u\in\Z^d} \int_{[0,1]^d} e(P(qu+r+qt)\cdot \eta) K_j(qu+r+qt) dt 
		\end{equation} 
		For every $i=1,\dots,n$ we obtain from the mean value theorem,
		\[ |(P_i(qu+r+qt) - P_i(qu+r)) \eta_i| \le q |\eta| \sup_{|z|\approx 2^j} |P_i'(z)| \lesssim 2^{\delta' j} 2^{-(D_i-1)j} 2^{-\delta j} 2^{(D_i-1)j} = 2^{-\varepsilon j}, \]
		where we have used that $|qu+r+qt|$ and $|uq+r|$ are $\approx 2^j$ and that $t\in [0,1]^d$, $q\le 2^{\delta' j}$, $|\eta_i|\le 2^{-(D_i-1)j} 2^{-\delta j}$.
		Since $\int_{\R^d} |K_j(t)| dt\approx 1$ this implies that \eqref{eqn:approx_pf0} is
		\begin{equation}\label{eqn:approx_pf1}
		q^d \sum_{u\in\Z^d} e(P(qu+r)\cdot \eta) \int_{[0,1]^d} K_j(qu+r+qt) dt  + O(2^{-j\varepsilon})
		\end{equation}
		Again from the mean value theorem we see
		\[ |K_j(qu+r+qt) - K_j(qu+r)| \le q \sup_{|z|\approx 2^j} |\nabla K_j(z)| \lesssim 2^{\delta' j} 2^{-(d+1)j} \le 2^{-dj} 2^{-\varepsilon j}, \]
		using that $t\in [0,1]^d$, $|qu+r+qt|\approx |qu+r|\approx 2^j$ and $q\le 2^{\delta' j}$ and $\delta<1$. Consequently, \eqref{eqn:approx_pf1} is
		\[ q^d \sum_{u\in\Z^d} e(P(qu+r)\cdot \eta) K_j(qu+r) + O(2^{-j\varepsilon}), \]
		which establishes the claim.
	\end{proof}

	\section{Sparse bounds for a real--variable Radon transform}\label{sec:realvar}
	
	A key step in the analysis of the major arcs is the application of a vector--valued sparse bound for the real-variable singular Radon transform
	\[ \mathcal{T}_{P,K} F(x) = \int_{\R^d} F(x+P(t)) K(t) dt, \]
    with $K$ a Calder\'{o}n--Zygmund kernel.
	The operator $\mathcal{T}_{P,K}$ can be interpreted as acting on functions on Euclidean space $\R^n$, or as acting on functions on the integer lattices $\Z^n$. We may also let the operator act on functions taking values in a Hilbert space. In order to simplify notation, we use the same letter for each of these interpretations. However, it will always be unambiguous from context which interpretation is meant.
	
	In our later application we will encounter the operator $\mathcal{T}_{P,K}$ interpreted as acting on functions $F:\Z^n\to \mathcal{H}$, where $\mathcal{H}$ is a separable complex Hilbert space. 
	The notion of sparse form naturally extends to $\mathcal{H}$--valued functions: for $F,G:\Z^n\to \mathcal{H}$, $r,s\in [1,\infty)$ and a sparse collection $\mathcal{S}$ we let
	\[\Lambda^{\mathcal{S}}_{r,s}(F,G) = \sum_{Q\in\mathcal{S}} |Q| \langle F\rangle_{Q,r} \langle G\rangle_{Q,s}, \]
	where
	\[ \langle F\rangle_{Q,r} = ( |Q|^{-1} \sum_{x\in Q} \|F(x)\|_\mathcal{H}^r)^{1/r}. \]
	
	Sparse bounds for real-variable singular Radon transforms were studied in a much more general setting in \cite{H19}. For our purpose, we need the following additional information on the sparse family: roughly speaking, if the support of $K$ is far from the origin, then the cubes in the sparse collection should have large sidelength. To ensure this we will depend on Condition (\L): there exist $\beta,L_0>0$ such that $|P(t)|\ge |t|^\beta$ for $|t|\ge L_0$.
	
	Sparse bounds for $\mathcal{T}_{P,K}$ are closely related to $L^p$ improving estimates for the corresponding single scale operators (see \cite{L17}).
	Let $\Omega_c$ denote the interior of the set of all $(\frac1r,\frac1s)\in [0,1]^2$ such that there exist $p\in [1,r]$ and $q\in [s',\infty]$ with $q>p$ such that
	\[ \|\mathcal{T}_{P,\psi} f\|_{L^q(\R^n)}\lesssim \|f\|_{L^p(\R^n)} \]
	holds for all test functions $f:\R^n\to \C$ and every smooth $\psi$ supported in $\{x\in\R^n\,:\,\frac12\le |x|\le 2\}$ with constants depending only on the supremum norm of $\psi$ and of derivatives of $\psi$ up to some finite order.
	
	\begin{prop}\label{prop:Hilbertsparse}
		Let $K$ be a Calder\'{o}n-Zygmund kernel (in the sense defined in the introduction) on $\R^d$ and $P:\Z^d\to \Z^n$ be admissible. Then for compactly supported functions $F,G:\Z^n\to \mathcal{H}$, $\sigma\in(0,1)$ and $(\frac1r,\frac1s)\in \Omega_c$ there exists a $\sigma$--sparse family $\mathcal{S}$ of $P$-cubes such that
		\begin{equation} \label{eqn:realsparse}
		|\langle \mathcal{T}_{P,K} F, G\rangle| \lesssim \Lambda_{r,s}^{\mathcal{S}}(F, G),
		\end{equation}
		where the constant depends on $P,r,s,\sigma$. Moreover, if $K$ is supported on $\{|t|\ge L\}$ for some $L\ge L_0$, then $\mathcal{S}$ can be chosen such that the sidelength of each $P$-cube $Q\in\mathcal{S}$ is $\gtrsim L^{\beta/D}$, where $D=\mathrm{deg}\,P$.
	\end{prop}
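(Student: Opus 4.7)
The plan is to revisit the sparse domination argument for real-variable singular Radon transforms from \cite{H19}; the base sparse bound is essentially the main theorem there applied to our operator, and the admissibility hypotheses enter exactly to (i) place $(\tfrac1r,\tfrac1s)\in\Omega_c$, which matches the $L^p$-improving hypothesis used in \cite{H19}, and (ii) force H\"ormander's condition on $\gamma(x,t)=x-P(t)$ through Condition (C). Two additions are required beyond a direct citation: a Hilbert-space valued upgrade, and the sidelength lower bound $\ell(Q)\gtrsim L^{\beta/D}$ under the support hypothesis on $K$.

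For the Hilbert-valued extension, the operator $\mathcal{T}_{P,K}$ commutes with the action of $\mathcal{H}$, so all the single-scale $L^p\to L^q$ estimates underlying \cite{H19} upgrade to the $\mathcal{H}$-valued setting by Minkowski's inequality (apply the scalar bound in $\mathcal{H}$ and integrate). The stopping-time / recursive machinery of \cite{H19} tracks only pointwise averages of $|f|$ and $|g|$, and it carries over verbatim upon substituting $\|F\|_\mathcal{H}$ and $\|G\|_\mathcal{H}$; sparsity and the quantitative constants are unaffected.

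For the sidelength lower bound the key input is Condition (\L): if $K$ is supported on $\{|t|\ge L\}$ with $L\ge L_0$, then $|P(t)|\ge L^\beta$ on the support of $K$. For any $P$-cube $Q$ of sidelength $\ell$, the diameter satisfies $\mathrm{diam}\,Q\lesssim \ell^D$. Hence if $\ell \le c L^{\beta/D}$ for a sufficiently small absolute constant $c$, then for every $x\in Q$ the full integration range $\{x+P(t):t\in\mathrm{supp}\,K\}$ is disjoint from $Q$. Consequently, when the recursive construction from \cite{H19} reaches a cube $Q$ with $\ell(Q) < c L^{\beta/D}$, the local ``inner part'' of $\mathcal{T}_{P,K}$ (the piece with input and output both in $Q$, which is precisely what the recursion is trying to decompose further) is identically zero, so there is no residual contribution to be captured at smaller scales. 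I would therefore terminate the recursion at scale $c L^{\beta/D}$, thereby ensuring that every $Q\in\mathcal{S}$ satisfies $\ell(Q)\gtrsim L^{\beta/D}$.

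The main thing to check carefully, and what I expect to be the only mildly technical point, is that this truncation of the recursion does not disturb the sparse estimate or force a worse exponent pair $(r,s)$. Because the omitted small-scale contribution is exactly zero under the support assumption, there is no residual term to dominate, and the iteration closes with the same analytic input as in \cite{H19}; the modification is a localization of the construction, not a new estimate. Care is needed only to keep the stopping rule consistent with the sparse partitioning so that the family of halting cubes remains $\sigma$-sparse — standard once the threshold at scale $L^{\beta/D}$ is included in the stopping criterion alongside the usual averaged-maximal-function criteria.
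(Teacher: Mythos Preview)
Your proposal is correct and follows essentially the same route as the paper: invoke \cite[Theorem 1.4]{H19} (with Condition (C) ensuring applicability and that the resulting cubes are $P$-cubes), and then use Condition (\L) to see that any cube $Q$ with $\ell(Q)\ll L^{\beta/D}$ has $\langle \mathcal{T}_{P,K}(F\mathbf{1}_Q),G\mathbf{1}_Q\rangle=0$ and hence can be dropped. The only cosmetic difference is that the paper simply omits such cubes from the sparse family after the fact (since their contribution to the sparse form is nonnegative anyway), whereas you phrase it as terminating the recursion early; the paper's phrasing spares you the consistency check on the stopping rule that you flag in your last paragraph.
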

	
	\begin{proof}[Proof of Proposition \ref{prop:Hilbertsparse}]
		The estimate \eqref{eqn:realsparse} is a consequence of \cite[Theorem 1.4]{H19}, which we apply after extending $F$ and $G$ to $\R^n$ as follows: for $x\in\R^n$ define $F(x)=F(y)$ where $y\in\Z^n$ arises from rounding the components of $x$ to nearest integers. Condition (C) ensures that \cite[Theorem 1.4]{H19} is applicable and that the cubes in the resulting sparse collection may be taken to be $P$--cubes. 
		
		The second claim follows from the proof of \cite[Theorem 1.4]{H19}. A key step is the estimate
		$$
		|\langle \mathcal{T}_{P,K} (F\mathbf{1}_Q), G\mathbf{1}_Q\rangle| \lesssim |Q| \langle F \rangle_{Q, r}\langle G \rangle_{Q, s}
		$$
		for those cubes $Q$ that are selected in the sparse collection
		(see \cite[bottom of p. 284 and (6.18)]{H19}). If $\langle \mathcal{T}_{P,K} (F\mathbf{1}_Q), G\mathbf{1}_Q\rangle=0$ then the cube $Q$ can be omitted from the sparse collection. On the other hand, if $\langle \mathcal{T}_{P,K} (F\mathbf{1}_Q), G\mathbf{1}_Q\rangle\not=0$, then $Q \cap (Q+P(t)) \neq \emptyset$ for some $t$ with $|t| \ge L$. By Condition (\L) there exists $i \in \{1, \dots n\}$ such that $|P_i(t)| \ge L^{\beta}$. Thus, we see that the $i$-th side of $Q$ has length $\gtrsim L^\beta$, which implies that the sidelength of $Q$ is $\gtrsim L^{\beta/D_i} \ge L^{\beta/D}$.
	\end{proof}
	
	\section{Major arc estimate}\label{sec:major}
	\label{main term section}
	
	
	In this section we prove Proposition \ref{prop:main}. Here we adopt the key technique using Hilbert spaces of Culiuc, Kesler and Lacey \cite{CKL16}. Denote by $\mathcal{H}_k$ a complex Hilbert space of dimension $\# \mathcal{R}_k$ with vectors $v\in\mathcal{H}_k$ denoted as $v=(v_{a/q})_{a/q}=(v_{a/q})_{a/q\in\mathcal{R}_k}$ with $v_{a/q}\in\C$. 
	We claim that $\langle L^{(k)}(\nabla) f,g\rangle$ can be written as an inner-product in the Hilbert space $\ell^2(\Z^n; \mathcal{H}_k)$ of the form appearing on the left-hand side of \eqref{eqn:realsparse}.
	To see this, write
	\begin{eqnarray*}
		\langle L^{(k)}(\nabla)f,g \rangle_{\ell^2(\Z^n)}%
		&=& \sum_{a/q\in\mathcal{R}_k} S(a/q)  \left\langle \Phi_{\ge k/\delta'} (\xi-a/q) \chi_k(\xi-a/q)\widehat{f}(\xi), \widehat{g}(\xi) \right\rangle_{L^2(\xi\in[0,1]^n)}\\
		&=& \sum_{a/q\in\mathcal{R}_k} \left\langle \Phi_{\ge k/\delta'} (\xi) \chi_k(\xi)S(a/q)\widehat{f}(\xi+a/q), \widehat{g}(\xi+a/q)\right\rangle_{L^2(\xi\in[0,1]^n)},
	\end{eqnarray*}
	where $\Phi_{\ge k/\delta'}=\sum_{j \ge k/\delta'} \Phi_j.$ 
	For a function $f:\Z^n\to\C$, $k\ge 1$, $a/q\in\mathcal{R}_k$ and $x\in\Z^n$ define
	\[ \mathcal{A}_{k,a/q} [f] (x) =\int_{[0,1]^n}\chi_k(\xi)\widehat{f}(\xi+a/q) e(\xi\cdot x) d\xi. \]
	We also write
	\[ \mathcal{A}_k[f](x) = (\mathcal{A}_{k,a/q}[f](x))_{a/q} \in \mathcal{H}_k.\]
	
	Define functions $F^\sharp,G:\Z^n\to \mathcal{H}_k$ by
	\[ F^\sharp(x) = (S(a/q) \mathcal{A}_{k,a/q}[f](x))_{a/q}\quad\text{and}\quad G(x) = \mathcal{A}_{k-1}[g](x). \]
	Here we chose the notation $F^\sharp$ to remind us that $S(a/q)$ appears in its definition. Since $\chi_{k-1}$ is equal to one on the support of $\chi_k$, 
	\begin{eqnarray*}
		\langle L^{(k)}(\nabla)f,g\rangle%
		&=&\sum_{a/q\in\mathcal{R}_k} \left\langle \left(\Phi_{\ge k/\delta'} \right)(\nabla) F^\sharp_{a/q}, G_{a/q} \right\rangle_{\ell^2(\Z^n)},
	\end{eqnarray*}
	which we recognize as equal to
	\[ \left\langle \left(\Phi_{\ge k/\delta'} \right)(\nabla) F^\sharp, G\right\rangle_{\ell^2(\Z^n; \mathcal{H}_k)}. \]

	Let $(\frac1r,\frac1s)\in \Omega_c$. Then Proposition \ref{prop:Hilbertsparse} (for $K=\sum_{j\ge k/\delta'} K_j$, the operator $\Phi_{\ge k/\delta'}(\nabla)$ coincides with $\mathcal{T}_{P,K}$) yields a sparse collection ${\mathcal{S}}$ (depending on $f,g$ and $k$) such that
	\begin{equation} \label{20190808eq01}
	|\langle L^{(k)}(\nabla) f,g\rangle| \lesssim \Lambda^{{\mathcal{S}}}_{r,s}(F^\sharp,G).
	\end{equation} 
	Let us make $\delta'$ small enough so that $2^{1/\delta'}\ge L_0$ (where $L_0$ is the constant from Condition (\L)). Then by Proposition \ref{prop:Hilbertsparse}, we may assume that the $P$-cubes in ${\mathcal{S}}_k$ have sidelength $\gtrsim 2^{k\gamma_*}$, where $\gamma_*=\tfrac{\beta}{D \delta'}>0$. By making $\delta'$ small enough we ensure that $\gamma_*\ge 11$.
	
	It remains to estimate $\Lambda_{r,s}^\mathcal{S}(F^\sharp, G)$ in terms of a sparse form acting on $f$ and $g$. We begin by extracting the decay of $S(a/q)$. By \eqref{20190805eq04} we have for every $\varepsilon>0$,
	\begin{equation}\label{eqn:majorarcpf_first}
	\Lambda^{{\mathcal{S}}}_{r,s}(F^\sharp,G) \lesssim_\varepsilon 2^{-k/D_*+\varepsilon} \Lambda^{{\mathcal{S}}}_{r,s}(\mathcal{A}_k[f], \mathcal{A}_{k-1}[g] ).
	\end{equation}
	
	Next we need to compare $\langle \mathcal{A}_k[f] \rangle_{Q,r}$ to $\langle f\rangle_{Q,r}$. By definition,
	\[ \langle \mathcal{A}_k[f]\rangle_{Q,r}^r = |Q|^{-1} \sum_{x\in Q} \big( \sum_{a/q\in\mathcal{R}_k} |\mathcal{A}_{k,a/q}[f](x)|^2 \big)^{r/2}. \] 
	Observe that $\mathcal{A}_{k,a/q}[f](x)$ arises by convolution of $f$ with an $\ell^1$-normalized bump function adapted to a ball of radius $2^{10k}$ centered at the origin. Therefore, if the sidelength of $Q$ is at least $2^{10k}$, we expect $|\mathcal{A}_{k,a/q}[f](x)|$ to be negligible if $f$ is supported far from $Q$. This motivates us to write
	\begin{equation}\label{eqn:majorarcpf_1}
	\mathcal{A}_k [f] = \mathcal{A}_k [f\mathbf{1}_{2Q}] + \mathcal{A}_k [f\cdot (1-\mathbf{1}_{2Q})].
	\end{equation}
	
	The first term will be controlled by orthogonality among different $a/q\in\mathcal{R}_k$. More precisely, we have the following claim.
	
	{\em Claim 1.} For every $P$-cube $Q$, $k\ge 1$ and $r\in [1,\infty]$,
	\begin{equation}\label{eqn:majorarcpf_close}
	\langle \mathcal{A}_k[f \mathbf{1}_{2Q}]\rangle_{Q,r} \lesssim 2^{\frac{n+1}{2}|\frac1r-\frac12| k} \langle f\rangle_{2Q,r}.
	\end{equation}
	
	The important feature of this estimate is that we can make the exponent arbitrarily small by having $r$ close enough to $2$.
	
	\begin{proof}[Proof of Claim 1.] We interpolate between $r=1,2,\infty$. For $r=1$ we estimate
		\[ \langle \mathcal{A}_k[f\mathbf{1}_{2Q}]\rangle_{Q,1} \le (\#\mathcal{R}_k)^{1/2} |Q|^{-1} \sum_{x\in Q} (|\mathcal{F}^{-1}(\chi_k)|*|f\mathbf{1}_{2Q}|)(x). \] 
		Since $\# \mathcal{R}^\flat_k\lesssim 2^{(n+1)k}$ and $\|\mathcal{F}^{-1}(\chi_k)\|_{\ell^1}\approx 1$, the previous is $\lesssim 2^{\frac{n+1}2 k} \langle f\rangle_{2Q,1}.$
		Similarly, 
		\[\langle \mathcal{A}_k[f\mathbf{1}_{2Q}]\rangle_{Q,\infty} \lesssim (\#\mathcal{R}_k)^{1/2} \sup_{x\in Q} (|\mathcal{F}^{-1}(\chi_k)|*|f\mathbf{1}_{2Q}|)(x)\lesssim 2^{\frac{n+1}{2}k} \langle f\rangle_{2Q,\infty}. \]
		For $r=2$ we commute $L^2$--norms, estimate $\mathbf{1}_Q\le 1$ and use Plancherel's theorem to see
		\[ \langle \mathcal{A}_k[f\mathbf{1}_{2Q}]\rangle_{Q,2}^2 \le |Q|^{-1} \sum_{a/q\in\mathcal{R}_k} \int_{[0,1]^n} |\widehat{\mathcal{A}_{k,a/q}[f\mathbf{1}_{2Q}]}(\xi)|^2 d\xi. \]
		The right-hand side is equal to
		\[ |Q|^{-1} \int_{[0,1]^n} \Big(\sum_{a/q\in\mathcal{R}_k} |\chi(2^{10k}(\xi-a/q))|^2 \Big) |\widehat{f\mathbf{1}_{2Q}}(\xi)|^2 d\xi. \]
		Using that the functions $\xi\mapsto |\chi(2^{10k}(\xi-a/q))|^2$ have disjoint support and applying Plancherel's theorem again, we see that the previous display is $\lesssim \langle f\rangle_{2Q,2}^2$.
	\end{proof}
	
	As hinted above, the second term in \eqref{eqn:majorarcpf_1} will be controlled by using rapid decay of $\mathcal{F}^{-1}(\chi_k)$, as long as the sidelength of $Q$ is large enough.
	
	{\em Claim 2.} For every $P$-cube $Q$ with $\ell(Q)\ge 2^{11k}$ and every $k\ge 1$, $r\in [1,\infty]$ and every $N\ge 1$,
	\begin{equation}\label{eqn:majorarcpf_far}
	\langle \mathcal{A}_k[f(1-\mathbf{1}_{2Q})] \rangle_{Q,r} \lesssim_N  |Q|^{-N} 2^{-kN} \sum_{\nu=1}^\infty 2^{-N\nu} \langle f\rangle_{2^\nu Q,1}.
	\end{equation}
	
	\begin{proof}[Proof of Claim 2.]
		Write $f^\sharp = f\cdot (1-\mathbf{1}_{2Q})$. Begin with the estimate
		\[ \langle \mathcal{A}_k[f^\sharp]\rangle_{Q,r} \lesssim 2^{\frac{n+1}{2} k} \big(|Q|^{-1} \sum_{x\in Q} (|\mathcal{F}^{-1}(\chi_k)|*|f^\sharp|)(x)^r \big)^{1/r}, \]
		where we have used $\# \mathcal{R}_k\lesssim 2^{(n+1)k}$. Since $\chi$ is smooth, $\mathcal{F}^{-1}(\chi_k)$ decays rapidly: for every $N\ge 1$,
		\[ |\mathcal{F}^{-1}(\chi_k)(y)| \lesssim_N 2^{-10k} (1+2^{-10k}|y|)^{-N}.\]
		If $x\in Q$ and $y\not\in 2^{\nu} Q$ for $\nu\ge 1$, then $|x-y|\gtrsim 2^\nu \ell(Q)\ge 2^{11k+\nu}$. Hence for $x\in Q$ and every $M\ge 1$,
		\[  (|\mathcal{F}^{-1}(\chi_k)|*|f^\sharp|)(x) \lesssim_M   \sum_{\nu=2}^\infty \sum_{y\in 2^\nu Q\setminus 2^{\nu-1} Q} |f(y)|(2^{-10k} \ell(Q) 2^\nu)^{-M}. \]
		Using this for large enough $M$ and using $\ell(Q)\ge 2^{11k}$ we get
		\[ \lesssim_N |Q|^{-N} \sum_{\nu=1}^\infty 2^{-(k+\nu)N} |2^\nu Q|^{-1} \sum_{y\in 2^\nu Q} |f(y)|, \]
		for every $N\ge 1$. This concludes the proof.
	\end{proof}
	
	Decomposing both $f=f\mathbf{1}_{2Q}+f\cdot(1-\mathbf{1}_{2Q})$ and $g=g\mathbf{1}_{2Q}+g\cdot(1-\mathbf{1}_{2Q})$, Claims 1 and 2 yield
	\[ \Lambda_{r,s}^{\mathcal{S}} ( \mathcal{A}_k [f], \mathcal{A}_{k-1}[g] ) \lesssim 2^{\frac{n+1}2(|\frac1r-\frac12|+|\frac1s-\frac12|)k} \sum_{Q\in \mathcal{S}} |Q| \langle f\rangle_{2Q,r} \langle g\rangle_{2Q,s} + \mathrm{remainder},\]
	where the remainder term is dominated by
	\[ \lesssim 2^{-k} \sum_{\nu=1}^\infty 2^{-100 \nu} \sum_{Q\in\mathcal{S}} |Q| \langle f\rangle_{2^\nu Q,1} \langle g\rangle_{2^\nu Q,1}. \]
	This remainder includes the mixed terms stemming from interaction of $f\mathbf{1}_{2Q}$ with $g\cdot(1-\mathbf{1}_{2Q})$ et cetera. Here we have exploited the gain in $|Q|$ from Claim 2 to allow crudely estimating $r$-averages by $1$-averages: $\langle f\rangle_{Q,r}\le |Q|^{1-\frac1r} \langle f\rangle_{Q,1}$.

	If $\mathcal{S}$ is $\sigma$--sparse, then $2^\nu \mathcal{S}=\{ 2^\nu Q\,:\,Q\in\mathcal{S}\}$ is $\approx \sigma 2^{-\nu}$--sparse and can be written as a union of $\approx 2^\nu$ many $\sigma$--sparse families.  Due to our definition of sparse forms using $P$-cubes, this fact follows from comparing sidelengths of the cubes in $2^\nu \mathcal{S}$ to $\mathcal{S}$, applying the Carleson condition, and decomposing into sparse subcollections.  Here one could have instead applied Mei's lemma to contain the $2^\nu Q$ in dyadic cubes (which are the $P$-cubes on a space of homogeneous type) of roughly the same size, as is often done in the literature (see \cite{LN18} for more details). By the universal sparse domination (Lemma \ref{lem:uniformsparse}) we then have
	\[ \text{remainder} \lesssim 2^{-k} \Lambda^{*}_{1,1}(f,g). \]
	Summarizing, we  proved Proposition \ref{prop:main} for all $(\frac1r,\frac1s)\in \Omega_c$ (defined in \S \ref{sec:realvar}) with
	\begin{equation}\label{eqn:major_condition}
	\tfrac1{D_*} > \tfrac{n+1}2 (|\tfrac1r-\tfrac12|+|\tfrac1s-\tfrac12|).
	\end{equation}
	
	\section{Minor arc estimate}\label{sec:minor}
	\label{error section}
	
	The proof of Proposition \ref{prop:err} relies on the following key estimate.
	
	\begin{lem}\label{lem:error}
		There exists $\varepsilon'>0$ such that $|E_j(\xi)|\lesssim 2^{-\varepsilon' j}$ for every $j$ and $\xi$.
	\end{lem}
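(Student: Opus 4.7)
The plan is to dichotomize the proof according to whether $\xi$ lies in the union of major arcs $\mathfrak{M}_j$: in the major arc regime Lemma \ref{lem:approx} already produces the leading behaviour of $m_j$, and the task is to show that $L_j$ reproduces this leading term up to an acceptable error, while in the minor arc regime both $m_j$ and $L_j$ have to be bounded by $O(2^{-\varepsilon' j})$ directly.

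In the major arc case, I would fix $\xi \in \mathfrak{M}_j(a/q)$ with $(a,q)=1$, $q \le 2^{\delta' j}$, and let $k_*$ be the integer with $a/q \in \mathcal{R}_{k_*}$ (so $k_* \le \delta' j + 1$). Lemma \ref{lem:approx} immediately gives $m_j(\xi) = S(a/q)\Phi_j(\xi-a/q) + O(2^{-\varepsilon j})$. Because $|\xi - a/q| \lesssim 2^{-\delta j} \ll 2^{-10 k_*}$ (via $\delta > 10 \delta'$), the cutoff $\chi_{k_*}$ equals $1$ at $\xi - a/q$, so the $(k_*, a/q)$ summand of $L_j$ is exactly $S(a/q)\Phi_j(\xi - a/q)$. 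I would then dispose of every remaining summand $(k, a'/q')$ of $L_j$ using Diophantine separation: $|a/q - a'/q'| \ge 1/(qq') \ge 2^{-k-k_*}$ in some coordinate, and combined with $|\xi - a'/q'| \lesssim 2^{-10k}$ this forces $k \le (k_* + O(1))/9$. For each such $k$ at most one $a'/q'$ remains, with $|\xi - a'/q'| \gtrsim 2^{-k-k_*} \gg 2^{-(D-1)j}$, and a van der Corput estimate for the oscillatory integral defining $\Phi_j$ would then give $|\Phi_j(\xi - a'/q')| \lesssim 2^{-c j}$, which sums to the required $O(2^{-\varepsilon' j})$.

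In the minor arc case, $\xi \notin \mathfrak{M}_j(a/q)$ for every $q \le 2^{\delta' j}$. The bound $|m_j(\xi)| \lesssim 2^{-c j}$ will follow from a classical Weyl-type minor arc estimate: Dirichlet's theorem gives a rational approximation to $\xi$ with denominator exceeding $2^{\delta' j}$, and Weyl's inequality combined with Condition (C) (producing the coprimality of a phase coefficient to the denominator, as in the derivation of \eqref{20190805eq04}) yields the claim. For $L_j(\xi)$, every active summand $(k, a'/q')$ obeys $|\xi - a'/q'| \lesssim 2^{-10k}$; since $\xi \notin \mathfrak{M}_j(a'/q')$ there is some $i$ with $|\xi_i - a'_i/q'| > 2^{-(D_i-1)j - \delta j}$, and integrating by parts in the oscillatory integral defining $\Phi_j$ in that coordinate will give $|\Phi_j(\xi - a'/q')| \lesssim 2^{-c j}$. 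Summing the finitely many active summands completes the estimate.

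The main obstacle I anticipate is extracting the anisotropic oscillatory-integral decay for $\Phi_j$ at the precision needed by both cases, namely $|\Phi_j(\eta)| \lesssim 2^{-c j}$ whenever $|\eta_i| 2^{(D_i-1)j} > 2^{-\delta j}$ for some $i$. Because the coordinates of the phase $P(t)\cdot \eta$ live at different natural scales $D_i$, the van der Corput step must be executed in a direction of $t$ singled out by Condition (C), which exhibits a maximal-degree monomial in each $P_i$ and hence a variable of $t$ in which the phase has a large derivative. Once this estimate is in place, the case analysis above reduces to a book-keeping exercise.
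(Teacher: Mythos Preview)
Your proposal is correct and follows essentially the same route as the paper: dichotomize by major/minor arcs, use a van der Corput bound $|\Phi_j(\eta)|\lesssim 2^{-(1-\delta)j/D}$ (derived via Condition~(C)) to kill the extra summands of $L_j$, and invoke a Weyl-type minor arc estimate (the paper cites \cite[Proposition~3]{SW99}) for $m_j$ when $\xi\notin\mathfrak{M}_j$. One simplification worth noting: in the major arc case the paper bypasses your Diophantine separation argument entirely by observing that the major arcs in $\mathfrak{M}_j$ are pairwise disjoint, so for $k\ne k_*$ the single active rational $a'/q'\in\mathcal{R}_k$ automatically satisfies $\xi\notin\mathfrak{M}_j(a'/q')$, and the $\Phi_j$ bound applies immediately.
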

	
	\begin{proof} 
		We first record a preliminary observation.
		
		{\em Claim:} If $\xi\not\in \mathfrak{M_j}(a/q)$ then 
		\begin{equation}\label{eqn:pferror_vdc}
		|\Phi_j(\xi-a/q)| \lesssim 2^{-\frac{1-\delta}{D} j}.
		\end{equation}
		\begin{proof}[Proof of claim.]
			By assumption there exists $i=1,\dots,n$ such that
			\[ |\xi_i - a_i/q| \ge 2^{-(D_i-1)j} 2^{-\delta j}. \]
			Let us write $\eta = \xi-a/q$ and look at the oscillatory integral
			\[ \Phi_j(\eta) = \int_{\R^d} e(P(2^j t)\cdot \eta) \psi(t) 2^{jd} K(2^j t) dt. \]
			Let $\alpha^{(i)}$ be the distinguished multiindex from Condition (C). Then
			\[ P(2^j t)\cdot \eta = \eta_i 2^{j D_i} t^{\alpha^{(i)}} + \text{remaining terms} \]
			Thus the coefficient of $t^{\alpha^{(i)}}$ is $\ge 2^{(1-\delta)j}$, which by a well-known oscillatory integral estimate (see \cite[Ch. VIII.2, Prop. 5]{Ste93}) implies \eqref{eqn:pferror_vdc}.
		\end{proof}
		
		\noindent The proof of Lemma \ref{lem:error} splits into two cases.
		
		{\em Case I:} $\xi\in\mathfrak{M}_j$. Then there exists $a_*/q_*\in\mathcal{R}_{k_*}$ with $k_*\le j\delta'$ such that $\xi\in\mathfrak{M}_j(a_*/q_*)$. We have
		\[ L_{j,k_0}(\xi) = S(a_*/q_*) \Phi_j(\xi-a_*/q_*) \]
		and by Lemma \ref{lem:approx} this is $m_j(\xi)+O(2^{-j\varepsilon})$. Thus
		\begin{equation}\label{eqn:errorestpf0}
		|E_j(\xi)| \le \sum_{\substack{1\le k\le j\delta',\\k\not=k_*}} |L_{j,k}(\xi)|
		\end{equation}
		For each $k$ there exists at most one $a/q\in\mathcal{R}_k$ such that
		$\chi_k(\xi-a/q)\not=0$. If $k\not=k_*$ and $k\le j\delta'$ then we know additionally that $\xi\not\in\mathfrak{M}_j(a/q)$ because the major arcs in $\mathfrak{M}_j$ are disjoint. Then \eqref{eqn:errorestpf0} and \eqref{eqn:pferror_vdc} give
		\[ |E_j(\xi)| \lesssim j 2^{-\frac{1-\delta}{D} j}. \]
		
		{\em Case II:} $\xi\not\in\mathfrak{M}_j$. We begin with the estimate		
		\[ |E_j(\xi)|\le |L_j(\xi)| + |m_j(\xi)|. \]
		Since $\xi\not\in\mathfrak{M}_j(a/q)$ and for each $k$ there exists at most one $a/q\in\mathcal{R}_k$ with $\chi_k(\xi-a/q)\not=0$, we have  $|L_j(\xi)|\lesssim j 2^{-\frac{1-\delta}{D}j}$ by \eqref{eqn:pferror_vdc}.
		To estimate $|m_j(\xi)|$ we use \cite[Proposition 3]{SW99}.
		Let $\epsilon_0=\delta'/n$. By Dirichlet's approximation theorem there exist reduced fractions $a_1/q_1,\dots,a_n/q_n\in\mathbb{Q}$ with $(a_l,q_l)=1$ and $q_l\le 2^{(D_l - \epsilon_0)j}$ and
		\[ |\xi_l-a_l/q_l| \le \tfrac1{q_l} 2^{-(D_l-\epsilon_0)j}\]
		for every $l=1,\dots,n$. Since $\epsilon_0< \frac{\delta}{10n} <1-\delta$ we have $\xi\in \mathfrak{M}_j( (a_l/q_l)_l )$. But also $\xi\not\in\mathfrak{M}_j$, so the least common multiple of $q_1,\dots,q_n$ must be greater than $2^{j\delta'}$. Hence there must exist $i=1,\dots,n$ such that $q_i\ge 2^{j\delta'/n}=2^{j\epsilon_0}$. From Condition (C) we have that 
		\[ P(y)\cdot \xi = \xi_i y^{\alpha^{(i)}} + \text{remaining terms} \]
		Thus \cite[Proposition 3]{SW99} implies that there exists some $\varepsilon'>0$ such that $|m_j(\xi)|\lesssim 2^{-\varepsilon' j}$ as desired.		 		
	\end{proof}
	
	In addition we note the crude derivative estimate
	\[
	|\partial_{\alpha} E_j(\xi)| \lesssim_\alpha 2^{(\alpha\cdot\underline{D}) j},
	\]
	valid for all multiindices $\alpha$, where $\underline{D}=(D_1,\dots,D_n)$.
	Let us write $\mathcal{K}_j = \mathcal{F}^{-1}[E_j]$.
	
	We define the anisotropic norm $\rho(x)=\max_{i} |x_i|^{1/D_i}$.
	Integrating by parts $N$ times in the coordinate $i_0$ so that $\rho(x)=|x_{i_0}|^{1/D_{i_0}}$ we get for $|x|\ge 1$,
	\begin{equation}\label{eqn:err_kernelest1}
	|\mathcal{K}_j (x)| \lesssim_N 2^{ N D_{i_0} j} \frac{1}{|x_{i_0}|^N} \le 2^{N D j} \frac1{\rho(x)^N}
	\end{equation}
	for all positive integers $N$, where we have estimated $|x_{i_0}|\ge |x_{i_0}|^{1/D_{i_0}}= \rho(x)$. On the other hand, Lemma \ref{lem:error} tells us that
	\begin{equation}\label{eqn:err_kernelest2}
	|\mathcal{K}_j (x)| \lesssim 2^{-\varepsilon' j}.
	\end{equation}
	
	The idea is to apply Proposition \ref{prop:Kfinitesupport}. To do this we need to restrict $\mathcal{K}_j$ to some large enough $P$-cube $Q_*$. Specifically, we want $Q_*$ large enough so that the $\ell^1$ sum of $\mathcal{K}_j$ on the complement of $Q_*$ is small. Motivated by \eqref{eqn:err_kernelest1}, we set
	\[ 	Q_* = \{ x\in\Z^n\,:\,\rho(x)\le 2^{(D+1) j} \}. \]
	Then applying \eqref{eqn:err_kernelest1} with $N$ large and using $|\{x\in\Z^n\,:\,\rho(x)\le L\}|\approx L^{|\underline{D}|}$ (where $|\underline{D}|=\sum_{i=1}^n D_i$),
	\begin{equation}\label{eqn:err_kernelL1}
	\|\mathcal{K}_j \mathbf{1}_{Q_*^c}\|_{\ell^1} \lesssim_N 2^{N D j} \sum_{k\ge 0} \sum_{\rho(x)\approx 2^{(D+1)j+k}} \rho(x)^{-N} \approx 2^{(|\underline{D}|(D+1)-N)j} \sum_{k\ge 0} 2^{-k(N-|\underline{D}|)},
	\end{equation}
	which implies that
	\[ \|\mathcal{K}_j \mathbf{1}_{Q_*^c}\|_{\ell^1} \lesssim_M 2^{-jM}\]
	for all $M\ge 1$. Using that the right hand side of \eqref{eqn:err_kernelest1} (with $N=|\underline{D}|(D+1)+1$) provides a radially (radial with respect to $\rho$) decreasing integrable majorant for $\mathcal{K}_j \mathbf{1}_{Q_*^c}$ with $\ell^1$ norm $\lesssim 2^{-j}$, we get
	\[ |(\mathcal{K}_j \mathbf{1}_{Q_*^c})*f| \lesssim 2^{-j} M_P f(x). \]
	By Proposition \ref{sparseHL} we obtain
	\[ \| f\mapsto \mathcal{K}_j \mathbf{1}_{Q_*^c}*f \|_{\mathrm{sp}(1,1)} \lesssim 2^{-j}. \]
	It remains to treat the kernel $\mathcal{K}_j \mathbf{1}_{Q_*}$. We begin by establishing the $\ell^p$--improving estimates for $\mathcal{K}_j \mathbf{1}_{Q_*}$. 	
	We claim that for all $r\in [1,2]$,
	\begin{equation}\label{eqn:err_kernel0_Lpimproving1}
	\|(\mathcal{K}_j \mathbf{1}_{Q_*}) * f\|_{\ell^{r'}} \lesssim 2^{-\varepsilon' j} \|f\|_{\ell^r}.
	\end{equation}
	This is by interpolation between the endpoints $r=1$ and $r=2$. The case $r=2$ follows from Plancherel's theorem and Lemma \ref{lem:error} and the case $r=1$ is a consequence of \eqref{eqn:err_kernelest2}.
	Additionally, setting $N_P=|\underline{D}|(D+1)$, we have the estimate
	\begin{equation}\label{eqn:err_kernel0_L11}
	\|(\mathcal{K}_j \mathbf{1}_{Q_*}) * f\|_{\ell^1} \lesssim 2^{(N_P-\varepsilon')j} \|f\|_{\ell^1}
	\end{equation}
	from \eqref{eqn:err_kernelest2} and definition of $Q_*$. Interpolating \eqref{eqn:err_kernel0_Lpimproving1} and \eqref{eqn:err_kernel0_L11} we obtain 
	\begin{equation}
	\| (\mathcal{K}_j \mathbf{1}_{Q_*}) * f\|_{\ell^{s'}} \lesssim 
	2^{-\varepsilon' j + N_P (\frac1r - \frac1s)j}
	\|f\|_{\ell^r} 	
	\end{equation}
	for every $(\frac1r,\frac1s)$ in the closed triangle $\Delta$ with vertices $(\tfrac12,\tfrac12), (1,0), (1,1)$ (in particular $\frac1r-\frac1s>0$).
	Proposition \ref{prop:Kfinitesupport} then yields
	\begin{equation}
	\| f\mapsto (\mathcal{K}_j \mathbf{1}_{Q_*})*f \|_{\mathrm{sp}(r,s)} \lesssim 2^{-\varepsilon' j + 2 N_P (\frac{1}r-\frac12)j}
	\end{equation}
	for all $(\frac1r, \frac1s)\in \Delta$. 
	From switching the roles of $r$ and $s$ and H\"older's inequality we then conclude that for every $(\frac1r,\frac1s)$ with $\max\{\tfrac1s, \tfrac1r\} <\tfrac12 + \tfrac{\varepsilon'}{2N_P}$ we have
	\[ \| f\mapsto (\mathcal{K}_j \mathbf{1}_{Q_*})*f \|_{\mathrm{sp}(r,s)} \lesssim 2^{-\gamma j} \]
	for some $\gamma>0$. This concludes the proof of Proposition \ref{prop:err}.
	
	\section{Quantitative sparse region}\label{sec:quant}

	Let $\Omega_m\subset [0,1]^2$ denote the set of $(\tfrac1r,\tfrac1s)~\in~[0,1]^2$ such that
	\[ \max(\tfrac1s,\tfrac1r) < \tfrac12+\tfrac{\varepsilon'}{2N_P}, \]
	where $N_P=(1+\mathrm{deg}\,P)(\sum_{i=1}^n \deg\,P_i)$ and $\varepsilon'$ is the exponent from Lemma \ref{lem:approx} (which depends among other quantities on the amount of power decay obtained in \cite[Proposition 3]{SW99}). This is the condition needed for the minor arc estimate in \S \ref{sec:minor}. Note that it implies the condition \eqref{eqn:major_condition} needed for the major arc term.
	Let $\Omega_c\subset [0,1]^2$ be the set defined in \S \ref{sec:realvar} (it is determined by the sharp $L^p$ improving region for the corresponding single-scale real-variable operator). 
	Then we have proved
	\[ (\tfrac1r,\tfrac1s)\in\Omega_m\cap \Omega_c\quad\Longrightarrow\quad \|T_P\|_{\mathrm{sp}(r,s)}<\infty. \]

\begin{center}
\begin{figure}[H]
\centering 
\begin{tikzpicture}[scale=5]
\draw (0,0) [->] -- (0,1.2) node [left] {$\tfrac{1}{s}$};
\draw (0,0) [->] -- (1.2,0) node [below] {$\tfrac{1}{r}$};
\draw [dashed] (0.85, 0.45)--(0.45, 0.85); 
\draw [dashed]  (1, 0)--(0, 1);
\draw [dashed] (0.85, 0.45)--(0.85, 0); 
\draw [dashed] (0.45, 0.85)--(0, 0.85); 
\fill [opacity=. 1] (0, 0)--(0.6, 0)--(0.6, 0.6)--(0, 0.6)--cycle; 
\draw (0, 1)  node [left] {$(0, 1)$} -- (1, 1)  -- (1, 0) node [below] {$(1, 0)$};
\fill (.265, .295) node [right] {$\Omega_m$}; 
\draw (0,0) [->] -- (0,1.2) node [left] {$\tfrac{1}{s}$};
\draw (0,0) [->] -- (1.2,0) node [below] {$\tfrac{1}{r}$};
\draw [dashed] (.6, .6) -- (.6, 0) ; 
\draw [dashed] (0, .6)  -- (.6, .6);
\draw [dashed]  (1, 0)--(0, 1);
\draw (0, 1)  node [left] {$(0, 1)$} -- (1, 1)  -- (1, 0) node [below] {$(1, 0)$};
\end{tikzpicture}
\caption{Range of $(\tfrac1r, \tfrac1s)$ for Theorem \ref{mainresult}. The larger quadrangle corresponds to the restriction imposed by the major arc estimate \eqref{eqn:major_condition}.}
\label{sparserangefigure}
\end{figure}
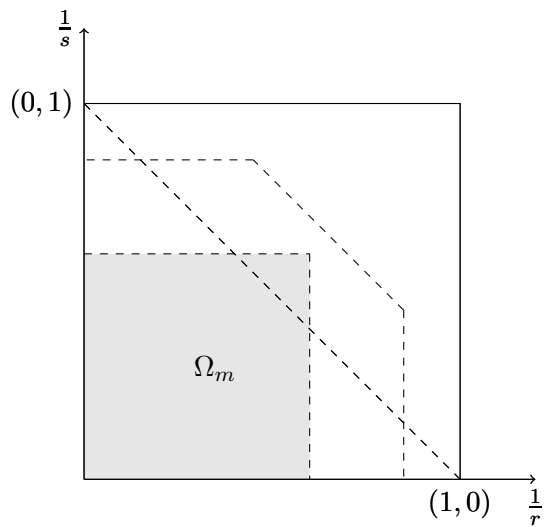
\end{center}

\end{document}